\newtheorem{thm}{Theorem}
\newtheorem{lemma}{Lemma}
\newtheorem{pro}{Proposition}
\numberwithin{equation}{section} \setcounter{tocdepth}{1}
\begin{document}

\vspace{0.5in}
\renewcommand{\bf}{\bfseries}
\renewcommand{\sc}{\scshape}
\vspace{0.5in}

\title[Qualitative Dynamics of a Discrete Phytoplankton-Zooplankton System]{Qualitative Dynamics of a Discrete Phytoplankton-Zooplankton System with Holling Type II Grazing and Type III Toxin Release}

\author{Sobirjon Shoyimardonov}

\address{S.K. Shoyimardonov$^{a,b}$ \begin{itemize}
\item[$^a$] V.I.Romanovskiy Institute of Mathematics, 9, University str.,
Tashkent, 100174, Uzbekistan;
\item[$^b$] National University of Uzbekistan,  4, University str., 100174, Tashkent, Uzbekistan.
\end{itemize}}
\email{shoyimardonov@inbox.ru}





\keywords{Phytoplankton-zooplankton system, Holling type, fixed point, Neimark-Sacker bifurcation, chaos control}

\subjclass[2010]{92D25, 37G15, 39A30}

\begin{abstract}
We study the dynamics of a discrete phytoplankton–zooplankton model with Holling type~II grazing and Holling type~III toxin release. The existence and stability of positive fixed points are analyzed, and conditions for the occurrence of Neimark--Sacker bifurcation are established. We show how feedback control can suppress complex dynamics near bifurcation. Global stability of the boundary equilibrium is also discussed. Numerical simulations confirm the theoretical findings, illustrating the rich behavior of the model under varying parameters.
\end{abstract}

\maketitle

\section{Introduction}

Mathematical modeling of predator–prey interactions has a long and rich history, tracing back to the foundational works of Lotka (1925) and Volterra (1926), who independently introduced continuous-time differential equation models to describe the dynamics of biological populations. Since then, a vast array of models have been developed to capture the complex interactions between species, incorporating various ecological mechanisms such as functional responses, time delays, spatial heterogeneity, and environmental fluctuations.

While classical models are often formulated in continuous time, discrete-time models have gained significant attention in recent decades. Discrete models are particularly relevant when populations reproduce in non-overlapping generations, or when data is collected at discrete time intervals. Moreover, discrete systems can exhibit rich dynamical behaviors, including period-doubling bifurcations and chaos, which are less readily observed in their continuous counterparts.

In marine ecosystems, phytoplankton and zooplankton form the foundation of the oceanic food web. Phytoplankton serve as the primary producers, while zooplankton act as their primary grazers. Understanding the dynamics of these interacting populations is crucial for predicting ecological responses to environmental changes such as nutrient availability, temperature shifts, and toxin production. In particular, certain species of phytoplankton produce allelopathic toxins that affect zooplankton feeding behavior and population growth, leading to non-trivial feedback mechanisms.

To describe such ecological processes, models often incorporate saturating functional responses, such as the Holling type II response, to represent grazing saturation at high prey densities. Toxin effects are commonly modeled using nonlinear inhibition functions, such as type III dose-response curves, to capture threshold and saturation behavior in predator inhibition. These biologically informed modifications significantly influence the system’s dynamics and stability properties.

A key goal in analyzing these models is to understand how qualitative changes in dynamics occur as system parameters vary, a study formalized through bifurcation theory. Of particular interest in discrete-time systems is the Neimark–Sacker (N-S) bifurcation, a unique phenomenon in which a stable fixed point loses stability and gives rise to a closed invariant curve (quasi-periodic orbit) as a pair of complex conjugate eigenvalues crosses the unit circle. This type of bifurcation has no counterpart in continuous-time systems, making it exclusive to discrete dynamical models and a hallmark of complex oscillatory behavior.

Several scientific studies have been devoted to investigating the dynamics of ocean ecosystems, particularly focusing on models that describe the interactions between phytoplankton and zooplankton. (\cite{Chen}, \cite{Shang}, \cite{Hen}, \cite{Hong}, \cite{Tian}, \cite{Mac}, \cite{RSH}, \cite{RSHV}, \cite{Sajan}).

The following model has a relatively general form and was originally introduced in \cite{Chatt}, where the effects of toxic substances released by plankton were taken into account:

\begin{equation}\label{chat}
\left\{
\begin{aligned}
&\frac{dP}{dt} = bP\left(1 - \frac{P}{K}\right) - \alpha f(P)Z, \\
&\frac{dZ}{dt} = \beta f(P)Z - rZ - \theta g(P)Z,
\end{aligned}
\right.
\end{equation}
where \( P \) and \( Z \) represent the population densities of phytoplankton and zooplankton, respectively. The parameters \( \alpha > 0 \) and \( \beta > 0 \) denote the predation rate and conversion efficiency of zooplankton feeding on phytoplankton. The constant \( b > 0 \) is the intrinsic growth rate of phytoplankton, and \( K > 0 \) is its environmental carrying capacity. The parameter \( r > 0 \) represents the natural mortality rate of zooplankton.

The function \( f(P) \) describes the zooplankton’s functional response to phytoplankton density, while \( g(P) \) models the effect of toxins released by phytoplankton. The parameter \( \theta > 0 \) quantifies the rate of toxin production by phytoplankton, which negatively affects the zooplankton population.

In \cite{SH}, a discrete-time version of the system \eqref{chat} was considered, incorporating a Holling type II predator functional response and a linear toxin distribution. In \cite{SH-2}, the discrete model \eqref{chat} was further studied with generalized Holling-type functional forms given by
\( f(P) = g(P) = \frac{P^h}{1 + cP^h} \), for \( h = 1, 2 \). In both cases, the existence and stability of positive fixed points were analyzed, and the occurrence of N--S bifurcation was established.

In \cite{SH-3}, a discrete variant of model \eqref{chat} was investigated with a linear predator functional response and a nonlinear toxin release function of Holling type:
\( g(P) = \frac{P^h}{1 + cP^h} \), for \( h = 1, 2 \). For \( h = 1 \), it was shown that the system possesses a unique positive fixed point, which may be attractive, repelling, or non-hyperbolic depending on parameter values. For \( h = 2 \), it was proven that up to three positive fixed points may exist, among which one is always a saddle and another is always attracting.

In the present work, we consider the discrete-time model \eqref{chat} with Holling type II grazing and a Type III toxin release, given by the following functions:
\[
f(P) = \frac{P}{\gamma + P}, \quad g(P) = \frac{P^2}{\gamma^2 + P^2},
\]
where \( \gamma > 0 \) denotes the half-saturation constant. This combination reflects a more biologically realistic interaction, where toxin effects intensify at higher prey densities while predator consumption saturates.

We denote
$$\overline{t}=bt, \, \overline{u}=\frac{P}{K}, \, \overline{v}=\frac{\alpha Z}{b}, \, \overline{\gamma}=\frac{\gamma}{K}, \, \overline{\beta}=\frac{\beta}{b}, \, \overline{r}=\frac{r}{b}, \, \overline{\theta}=\frac{\theta}{b}.$$

Then by dropping the overline sign at time $t\geq0$ we get

\begin{equation}\label{chenn}
\left\{\begin{aligned}
&\frac{du}{dt}=u(1-u)-\frac{uv}{\gamma+u}\\
&\frac{dv}{dt}=\frac{\beta uv}{\gamma+u}-rv-\frac{\theta u^2v}{\gamma^2+u^2}.\\
\end{aligned}\right.
\end{equation}

We consider the discrete-time version of the model \eqref{chat}, which takes the following form:
\begin{equation}\label{h12}
\begin{cases}
u^{(1)} = u(2 - u) - \dfrac{uv}{\gamma + u}, \\[2mm]
v^{(1)} = \dfrac{\beta uv}{\gamma + u} + (1 - r)v - \dfrac{\theta u^2 v}{\gamma^2 + u^2},
\end{cases}
\end{equation}
where \( u \) and \( v \) represent the scaled densities of phytoplankton and zooplankton, respectively. Throughout this work, we assume that all parameters \( r, \beta, \theta, \gamma \) are positive.

In this paper, we investigate the qualitative dynamics of system \eqref{h12}. Section~2 is devoted to the analysis of the existence of positive fixed points. In Section~3, we classify the nature of the fixed points based on linear stability analysis. Section~4 focuses on the occurrence of Neimark--Sacker bifurcations and the application of a feedback control method to regulate chaotic behavior. In Section~5, we study the global stability of the fixed point \( (1, 0) \). Section~6 presents numerical simulations to support and validate the analytical findings. Finally, Section~7 offers concluding remarks and possible directions for future research.

\section{Existence of positive fixed points}

It is straightforward to observe that system~\eqref{h12} always admits two nonnegative equilibrium points: \( (0, 0) \) and \( (1, 0) \). To find the positive fixed points, we set \( v^{(1)} = v \) and derive the following condition:

\begin{equation}\label{theta}
\theta = \Psi(u) := \frac{[(\beta - r)u - r\gamma](\gamma^2 + u^2)}{u^2(\gamma + u)}.
\end{equation}

Some basic properties of the function \( \Psi(u) \) include:
\[
\lim_{u \to 0^+} \Psi(u) = -\infty, \qquad \Psi(1) = \frac{(\beta - r - r\gamma)(\gamma^2 + 1)}{\gamma + 1}.
\]

The derivative of \( \Psi(u) \) is given by:
\begin{equation*}
\Psi'(u) = \frac{\gamma \left( \beta u^3 + 2 (r - \beta) \gamma u^2 + (4r - \beta) \gamma^2 u + 2r \gamma^3 \right)}{u^3 (u + \gamma)^2}.
\end{equation*}

Denote

\begin{equation}\label{hx}
h(u)=\beta u^3+ 2 (r - \beta) \gamma u^2 +  (4r - \beta) \gamma^2 u+ 2r \gamma^3
\end{equation}

Note that if $\beta \leq r$, then equation~\eqref{theta} has no real solutions; that is, there exists no positive fixed point. Moreover, the equation $\Psi(u) = 0$ has a unique real solution given by $\frac{r\gamma}{\beta - r}$. If this solution satisfies $\frac{r\gamma}{\beta - r} \geq 1$, which is equivalent to $\beta \leq r(1 + \gamma)$, then again equation~\eqref{theta} admits no real solutions. Therefore, we restrict our analysis to the case $\beta > r(1 + \gamma)$ to ensure the existence of a positive fixed point.
In this case we investigate all possible cases using the behaviour of the function $h(x)$ defined as (\ref{hx}). It is obvious that one root of the equation $h(x)=0$ is always negative, so it can have at most two positive real roots, denoted by $\widehat{u}_1$ and \( \widehat{u}_2 \) with \( \widehat{u}_1 < \widehat{u}_2 \).

\begin{thm}\label{thm2}
Let $\beta > r(1 + \gamma)$. Then the operator~\eqref{h12} has positive fixed points according to the following cases:

\begin{itemize}
    \item[(i)] Let \( (r, \gamma, \beta)\in A_0=\left\{(r, \gamma, \beta)\in \mathbb{R}_{+}^3: 0 < \gamma < \frac{3 + 6\sqrt{2}}{7}, \ \ r(1 + \gamma)<\beta \leq \frac{10 + 6\sqrt{2}}{7}r \right\} \).

        If  and   \( 0 < \theta < \Psi(1) \)  then there exists a \textbf{unique} positive fixed point.

    \item[(ii)] Let \( \beta > \frac{10 + 6\sqrt{2}}{7}r \). Then:

    \begin{itemize}
        \item[(ii.1)] If \( (r, \gamma, \beta)\in A_1=\left\{(r, \gamma, \beta)\in \mathbb{R}_{+}^3: 1 \leq \gamma \leq \sqrt{3}, \ \ \frac{4r\gamma(1+\gamma)}{\gamma^2 + 4\gamma - 3} \leq \beta \leq \frac{2r\gamma(1 + \gamma)^2}{\gamma^2 + 2\gamma - 1} \right\} \)  and \( 0 < \theta < \Psi(1) \)  then there exists a \textbf{unique} positive fixed point.

        \item[(ii.2)] If \( (r, \gamma, \beta)\in A_2=\left\{(r, \gamma, \beta)\in \mathbb{R}_{+}^3: \gamma>\sqrt{3}, \ \ r(1+\gamma) < \beta \leq \frac{2r\gamma(1 + \gamma)^2}{\gamma^2 + 2\gamma - 1} \right\} \)  and \( 0 < \theta < \Psi(1) \)  then there exists a \textbf{unique} positive fixed point.

        \item[(ii.3)] If \( (r, \gamma, \beta)\in A_3=\left\{(r, \gamma, \beta)\in \mathbb{R}_{+}^3: \sqrt{7} - 2 < \gamma \leq 1, \ \ \beta \geq \frac{4r\gamma(1+\gamma)}{\gamma^2 + 4\gamma - 3} \right\} \), then:
        \begin{itemize}
            \item if \( 0 < \theta < \Psi(1) \) or \( \theta = \Psi(\widehat{u}_1) \), then there exists a \textbf{unique} fixed point;
            \item if \( \Psi(1) < \theta < \Psi(\widehat{u}_1) \), then there exist \textbf{two} fixed points.
        \end{itemize}

  \item[(ii.4)] If \( (r, \gamma, \beta)\in A_4=\left\{(r, \gamma, \beta)\in \mathbb{R}_{+}^3: \gamma >1, \ \ \beta >\frac{2r\gamma(1+\gamma)^2}{\gamma^2 + 2\gamma - 1} \right\} \), then:
        \begin{itemize}
            \item if \( 0 < \theta < \Psi(1) \) or \( \theta = \Psi(\widehat{u}_1) \), then there exists a \textbf{unique} fixed point;
            \item if \( \Psi(1) < \theta < \Psi(\widehat{u}_1) \), then there exist \textbf{two} fixed points.
        \end{itemize}

        \item[(ii.5)] If \( (r, \gamma, \beta)\in A_5=\left\{(r, \gamma, \beta)\in \mathbb{R}_{+}^3: 0 < \gamma \leq 1, \ \ r(1+\gamma)<\beta \leq 4r  \right\} \) and \( 0 < \theta < \Psi(1) \), then there exists a \textbf{unique} fixed point.

        \item[(ii.6)] If \( (r, \gamma, \beta)\in A_6=\left\{(r, \gamma, \beta)\in \mathbb{R}_{+}^3: 1 < \gamma \leq \frac{3 + 6\sqrt{2}}{7}, \ \ \frac{10 + 6\sqrt{2}}{7}r<\beta < \frac{4r\gamma(1+\gamma)}{\gamma^2 + 4\gamma - 3} \right\} \) and \( 0 < \theta < \Psi(1) \), then there exists a \textbf{unique} fixed point.

        \item[(ii.7)] If \( (r, \gamma, \beta)\in A_7=\left\{(r, \gamma, \beta)\in \mathbb{R}_{+}^3: \frac{3 + 6\sqrt{2}}{7} < \gamma < \sqrt{3}, \ \ r(1 + \gamma) < \beta < \frac{4r\gamma(1+\gamma)}{\gamma^2 + 4\gamma - 3} \right\} \) and \( 0 < \theta < \Psi(1) \), then there exists a \textbf{unique} fixed point.

        \item[(ii.8)] If \( (r, \gamma, \beta)\in A_8=\left\{(r, \gamma, \beta)\in \mathbb{R}_{+}^3: \sqrt{2} - 1 < \gamma \leq \sqrt{7} - 2, \ \ \beta \geq \frac{2r\gamma(1 + \gamma)^2}{\gamma^2 + 2\gamma - 1}  \right\} \), then:
        \begin{itemize}
            \item if \( 0 < \theta < \Psi(1) \) or \( \theta = \Psi(\widehat{u}_1) \), then there exists a \textbf{unique} fixed point;
            \item if \( \Psi(1) < \theta < \Psi(\widehat{u}_1) \), then there exist \textbf{two} fixed points.
        \end{itemize}

        \item[(ii.9)] If \( (r, \gamma, \beta)\in A_9=\left\{(r, \gamma, \beta)\in \mathbb{R}_{+}^3: \sqrt{7} - 2 < \gamma < 1, \ \ \frac{2r\gamma(1 + \gamma)^2}{\gamma^2 + 2\gamma - 1} \leq \beta < \frac{4r\gamma(1+\gamma)}{\gamma^2 + 4\gamma - 3}  \right\} \),  then:
        \begin{itemize}
            \item if \( 0 < \theta < \Psi(1) \) or \( \theta = \Psi(\widehat{u}_1) \), then there exists a \textbf{unique} fixed point;
            \item if \( \Psi(1) < \theta < \Psi(\widehat{u}_1) \), then there exist \textbf{two} fixed points.
        \end{itemize}

        \item[(ii.10)] If \( (r, \gamma, \beta)\in A_{10}=\left\{(r, \gamma, \beta)\in \mathbb{R}_{+}^3: 0 < \gamma \leq \sqrt{2}-1, \ \ \beta >4r  \right\} \), then:
        \begin{itemize}
            \item if \( 0 < \theta < \Psi(\widehat{u}_2) \) then there exists a \textbf{unique} fixed point;
            \item if \(\theta =\Psi(\widehat{u}_2) \) then there exist \textbf{two} fixed points;
            \item if $\Psi(\widehat{u}_1)<\Psi(1)$ and \( \Psi(\widehat{u}_1) < \theta < \Psi(1) \), then there exists a \textbf{unique} fixed point;
            \item if $\Psi(\widehat{u}_1)<\Psi(1)$ and \(\theta = \Psi(\widehat{u}_1) \), then there exist \textbf{two} fixed points;
            \item if $\Psi(\widehat{u}_1)\leq\Psi(1)$ and \( \Psi(\widehat{u}_2) < \theta < \Psi(\widehat{u}_1) \), then there exist \textbf{three} fixed points;
            \item if $\Psi(\widehat{u}_1)\geq\Psi(1)$ and \( \theta = \Psi(\widehat{u}_1) \), then there exists a \textbf{unique} fixed point;
            \item if $\Psi(\widehat{u}_1)>\Psi(1)$ and \( \Psi(1) < \theta < \Psi(\widehat{u}_1) \), then there exists \textbf{two} fixed points;
            \item if $\Psi(\widehat{u}_1)>\Psi(1)$ and \( \Psi(\widehat{u}_2) < \theta < \Psi(1) \), then there exist \textbf{three} fixed points.
        \end{itemize}

        \item[(ii.11)] If \( (r, \gamma, \beta)\in A_{11}=\left\{(r, \gamma, \beta)\in \mathbb{R}_{+}^3: \sqrt{2}-1 < \gamma < 1, \ \ 4r < \beta < \frac{2r\gamma(1 + \gamma)^2}{\gamma^2 + 2\gamma - 1} \right\} \), then:
      \begin{itemize}
          \item if \( 0 < \theta < \Psi(\widehat{u}_2) \) then there exists a \textbf{unique} fixed point;
            \item if \(\theta =\Psi(\widehat{u}_2) \) then there exist \textbf{two} fixed points;
            \item if $\Psi(\widehat{u}_1)<\Psi(1)$ and \( \Psi(\widehat{u}_1) < \theta < \Psi(1) \), then there exists a \textbf{unique} fixed point;
            \item if $\Psi(\widehat{u}_1)<\Psi(1)$ and \(\theta = \Psi(\widehat{u}_1) \), then there exist \textbf{two} fixed points;
            \item if $\Psi(\widehat{u}_1)\leq\Psi(1)$ and \( \Psi(\widehat{u}_2) < \theta < \Psi(\widehat{u}_1) \), then there exist \textbf{three} fixed points;
            \item if $\Psi(\widehat{u}_1)\geq\Psi(1)$ and \( \theta = \Psi(\widehat{u}_1) \), then there exists a \textbf{unique} fixed point;
            \item if $\Psi(\widehat{u}_1)>\Psi(1)$ and \( \Psi(1) < \theta < \Psi(\widehat{u}_1) \), then there exists \textbf{two} fixed points;
            \item if $\Psi(\widehat{u}_1)>\Psi(1)$ and \( \Psi(\widehat{u}_2) < \theta < \Psi(1) \), then there exist \textbf{three} fixed points.
        \end{itemize}
    \end{itemize}
\end{itemize}
\end{thm}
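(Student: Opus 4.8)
The plan is to reduce everything to a one–variable study of the equation $\theta=\Psi(u)$ on the interval $(0,1)$ and to count its solutions from the monotonicity profile of $\Psi$, which is governed entirely by the sign of the cubic $h$ of \eqref{hx}. First I would note that a fixed point $(u,v)$ of \eqref{h12} with $u,v>0$ must satisfy, from the first equation, $v=(1-u)(\gamma+u)$, so that $v>0$ is equivalent to $u\in(0,1)$; inserting this into the second equation yields precisely $\theta=\Psi(u)$. Thus positive fixed points correspond bijectively to solutions of $\Psi(u)=\theta$ with $u\in(0,1)$. I would then record the basic behaviour of $\Psi$ on $(0,\infty)$: $\Psi(0^{+})=-\infty$, $\Psi(u)\to\beta-r$ as $u\to\infty$, $\Psi$ has the sign of $(\beta-r)u-r\gamma$ and hence its unique zero at $u_{0}=r\gamma/(\beta-r)$, and, since $\beta>r(1+\gamma)$, one has $u_{0}<1$ and $\Psi(1)>0$.

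The shape of $\Psi$ comes from $\Psi'(u)=\gamma h(u)/(u^{3}(u+\gamma)^{2})$, so the positive critical points of $\Psi$ are exactly the positive roots $\widehat u_{1}\le\widehat u_{2}$ of $h$. Scaling $u=\gamma w$ turns $h$ into $r\gamma^{3}$ times $p(w)=\mu w^{3}+2(1-\mu)w^{2}+(4-\mu)w+2$ with $\mu=\beta/r$, whose derivative has discriminant a positive multiple of $7\mu^{2}-20\mu+4$, with relevant root $\mu=\tfrac{10+6\sqrt2}{7}$. Using the identity $h(\gamma)=2\gamma^{3}(4r-\beta)$ and the factorisation of $h(\widehat u_{j})=0$ as
\[
\beta\,\widehat u_{j}\,(\widehat u_{j}^{2}-2\gamma\widehat u_{j}-\gamma^{2})=-2r\gamma(\widehat u_{j}+\gamma)^{2},
\]
I would establish three structural facts. (a) $h$ has two distinct positive roots if and only if $\beta>4r$: for $\beta\le 4r$ the envelope identity $\tfrac{d}{d\beta}h(c_{2})=\partial_{\beta}h(c_{2})<0$ (which holds since $0<c_{2}<\gamma(1+\sqrt2)$, where $c_{2}$ is the larger critical point of $h$, defined once $\beta\ge\tfrac{10+6\sqrt2}{7}r$) together with $h(c_{2})=0$ at $\beta=4r$ shows the local minimum value of $h$ stays $\ge 0$, so $\Psi$ is strictly increasing on $(0,\infty)$. (b) When $\beta>4r$ one has $\widehat u_{1}<\gamma<\widehat u_{2}<\gamma(1+\sqrt2)$, and $\Psi$ increases on $(0,\widehat u_{1})$, decreases on $(\widehat u_{1},\widehat u_{2})$, and increases on $(\widehat u_{2},\infty)$. (c) The displayed relation forces $\widehat u_{1}^{2}-2\gamma\widehat u_{1}-\gamma^{2}<0$, and a short computation with it reduces $\widehat u_{1}>u_{0}$ to $\gamma^{2}+\widehat u_{1}^{2}>0$; hence $\widehat u_{1}>u_{0}$, so $\Psi(\widehat u_{1})>0$ and a fortiori $\Psi(\widehat u_{2})>0$, while $\Psi(\widehat u_{1})>\Psi(\widehat u_{2})$ since $\widehat u_{1}$ is a local maximum and $\widehat u_{2}$ a local minimum.

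It remains to place $\widehat u_{1},\widehat u_{2}$ relative to $1$, which follows from the signs of $h(1)=\beta(1-2\gamma-\gamma^{2})+2r\gamma(1+\gamma)^{2}$ and $h'(1)=\beta(3-4\gamma-\gamma^{2})+4r\gamma(1+\gamma)$: $h(1)<0$ gives $\widehat u_{1}<1<\widehat u_{2}$, whereas if $h(1)>0$ then, since the unique positive critical point of $h$ lies in $(\widehat u_{1},\widehat u_{2})$ when $\beta>4r$, one gets $\widehat u_{1}\ge 1$ when $h'(1)<0$ and $\widehat u_{2}\le 1$ when $h'(1)>0$. Here $\gamma=\sqrt2-1$ (zero of $1-2\gamma-\gamma^{2}$) and $\gamma=\sqrt7-2$ (zero of $3-4\gamma-\gamma^{2}$) appear, and $\gamma=1,\sqrt3,\tfrac{3+6\sqrt2}{7}$ emerge from mutual comparisons of the curves $\beta=r(1+\gamma)$, $\beta=\tfrac{10+6\sqrt2}{7}r$, $\beta=4r$, $\beta=\tfrac{4r\gamma(1+\gamma)}{\gamma^{2}+4\gamma-3}$ (the locus $h'(1)=0$) and $\beta=\tfrac{2r\gamma(1+\gamma)^{2}}{\gamma^{2}+2\gamma-1}$ (the locus $h(1)=0$): e.g.\ $\tfrac{4r\gamma(1+\gamma)}{\gamma^{2}+4\gamma-3}=r(1+\gamma)$ at $\gamma=\sqrt3$, the last two coincide with $4r$ at $\gamma=1$, and $1+\gamma=\tfrac{10+6\sqrt2}{7}$ at $\gamma=\tfrac{3+6\sqrt2}{7}$. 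This exhausts $\{\beta>r(1+\gamma)\}$ in three regimes: $\Psi$ strictly monotone on $(0,1)$ (regions $A_{0},A_{1},A_{2},A_{5},A_{6},A_{7}$); $\Psi$ increasing then decreasing, i.e.\ $\widehat u_{1}<1\le\widehat u_{2}$ (regions $A_{3},A_{4},A_{8},A_{9}$); $\Psi$ increasing–decreasing–increasing, i.e.\ $\widehat u_{2}<1$ (regions $A_{10},A_{11}$).

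In each regime the number of solutions of $\Psi(u)=\theta$ in $(0,1)$ for $\theta>0$ follows from the intermediate value theorem on the monotone pieces, using $\Psi(0^{+})=-\infty$, $\Psi(1)>0$ and the positivity and ordering of the critical values from (c): the monotone regime gives a unique solution exactly for $0<\theta<\Psi(1)$; the up–down regime gives one solution for $0<\theta<\Psi(1)$ or $\theta=\Psi(\widehat u_{1})$ and two for $\Psi(1)<\theta<\Psi(\widehat u_{1})$; and in the up–down–up regime the case split of $\theta$ against $\Psi(\widehat u_{2})<\min\{\Psi(1),\Psi(\widehat u_{1})\}$, together with $\Psi(\widehat u_{1})$ vs.\ $\Psi(1)$, produces the full lists in (ii.10)–(ii.11). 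The step I expect to be the main obstacle is not any single calculation but the bookkeeping of the previous paragraph: verifying that the boundaries of $A_{0},\dots,A_{11}$ are exactly the stated ones requires carefully comparing the five $\beta$-thresholds across the $\gamma$-ranges and checking that every resulting region lands in the asserted monotonicity regime; after that, the one-variable crossing counts are routine.
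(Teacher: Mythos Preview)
Your proposal is correct and follows essentially the same architecture as the paper: reduce to counting solutions of $\theta=\Psi(u)$ on $(0,1)$, read the monotonicity of $\Psi$ from the sign of the cubic $h$, and then sort out the parameter thresholds coming from $h(1)$, $h'(1)$, and the local minimum value of $h$. The curves you identify are the same as the paper's (its condition ``$u_2\gtrless 1$'' for the larger critical point of $h$ is exactly your $h'(1)\lessgtr 0$, and its ``$h(u_2)\gtrless 0$'' is your $\beta\lessgtr 4r$).

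There are two genuine, if minor, differences in technique. First, for the threshold $\beta=4r$ you invoke an envelope/monotonicity argument ($\tfrac{d}{d\beta}h(c_2)=\partial_\beta h(c_2)<0$ because $c_2<\gamma(1+\sqrt2)$), whereas the paper writes out $h(u_2)$ explicitly, substitutes $x=\beta/r$, and solves the resulting radical inequality $n(x)\ge m(x)^{3/2}$ by computing $n^2-m^3=54x^3(x^3+6x^2-42x+8)$. Your route is cleaner; the paper's is more explicit. Second, your fact~(c) --- that $\widehat u_1>u_0$ and hence $\Psi(\widehat u_1),\Psi(\widehat u_2)>0$ --- is not spelled out in the paper, which simply uses $\Psi(\widehat u_i)$ as positive thresholds for $\theta$; your observation fills that small gap. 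The bookkeeping you flag as the main obstacle is indeed the bulk of the paper's proof as well, carried out through the comparison lemmas (a)--(f) relating $r(1+\gamma)$, $\tfrac{4r\gamma(1+\gamma)}{\gamma^2+4\gamma-3}$, and $\tfrac{2r\gamma(1+\gamma)^2}{\gamma^2+2\gamma-1}$ across the $\gamma$-ranges.
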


\begin{figure}
  \centering
  \includegraphics[width=1\textwidth]{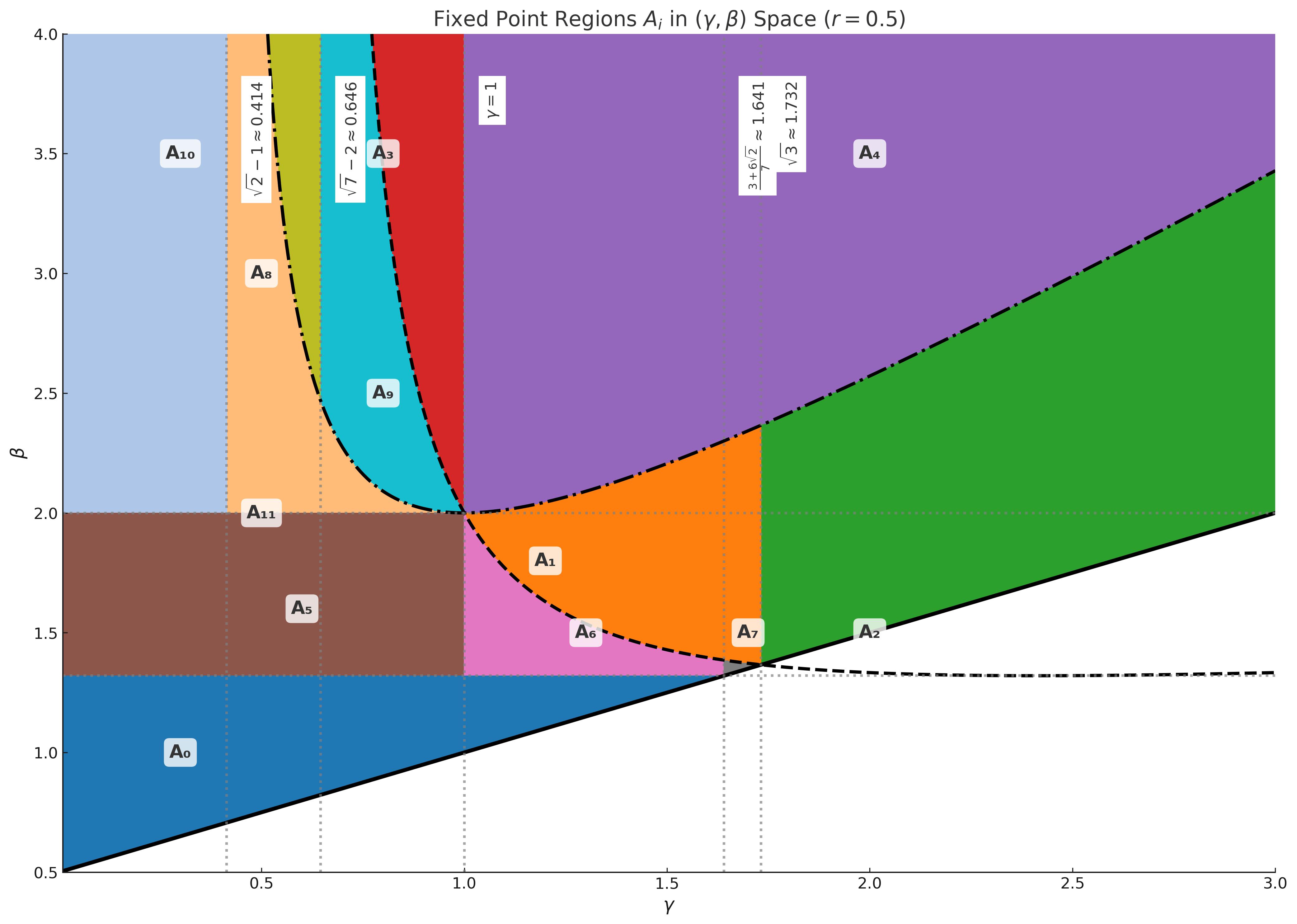}\\
   \caption{
    Partition of the $(\gamma, \beta)$-parameter space into twelve regions $A_0$ through $A_{11}$ corresponding to different numbers and configurations of positive fixed points of the system~\eqref{h12} for fixed $r = 0.5$. The black curves represent important bifurcation boundaries. Vertical and horizontal dotted lines indicate critical values of $\gamma$ and $\beta$, respectively. Each labeled region $A_i$ corresponds to a qualitatively distinct fixed point structure, such as existence, uniqueness, or multiplicity of positive fixed points.
    }\label{fig}
\end{figure}

\begin{proof}

Obviously, $h(0) = 2r\gamma^3 > 0$. To analyze the function $h(u)$, we first compute its derivative:
\begin{equation}\label{hder}
h'(u) = 3\beta u^2 + 4(r - \beta)\gamma u + (4r - \beta)\gamma^2.
\end{equation}

The discriminant of the quadratic equation \eqref{hder} is:
\[
D = 4\gamma^2(7\beta^2 - 20r\beta + 4r^2) = 28\gamma^2\left(\beta - \frac{10 - 6\sqrt{2}}{7}r\right)\left(\beta - \frac{10 + 6\sqrt{2}}{7}r\right).
\]

Since $\beta > r(1 + \gamma)$, it follows that $\beta - \frac{10 - 6\sqrt{2}}{7}r > 0$, so the sign of $D$ depends only on $\beta - \frac{10 + 6\sqrt{2}}{7}r$.

\textbf{Case (i):} Let $r(1 + \gamma) < \beta < \frac{10 + 6\sqrt{2}}{7}r$. To ensure that the inequalities are satisfied, the parameter $\gamma$ must lie in the interval
\[
0 < \gamma < \frac{3 + 6\sqrt{2}}{7}.
\]

Then $D < 0$, and hence $h(u)$ has no critical points and is strictly increasing. Thus, the function $\Psi(u)$ is also strictly increasing, and there exists a unique positive fixed point if $0 < \theta < \Psi(1)$.

Consider the boundary value $\beta = \frac{10 + 6\sqrt{2}}{7}r$. Then \eqref{hder} has a unique root:
\[
\overline{u} = \frac{1 + 2\sqrt{2}}{5 + 3\sqrt{2}} < 1.
\]

At this point,
\[
h(\overline{u}) = \frac{2r}{7} \left( 7\gamma^3 + (12\sqrt{2} - 15)\gamma(\gamma - 1) + 4\sqrt{2} - 5 \right),
\]
which is always positive for any $\gamma > 0$. Hence, in this case too, $\Psi(u)$ is increasing and there is a unique positive fixed point when $0 < \theta < \Psi(1)$.

\textbf{Case (ii):} Let $\beta > \frac{10 + 6\sqrt{2}}{7}r$. Then $D > 0$ and the equation $h'(u) = 0$ has two real roots:
\[
u_1 = \frac{2\gamma(\beta - r) - \gamma\sqrt{7\beta^2 - 20r\beta + 4r^2}}{3\beta}, \quad
u_2 = \frac{2\gamma(\beta - r) + \gamma\sqrt{7\beta^2 - 20r\beta + 4r^2}}{3\beta}.
\]
At $u_1$, $h(u)$ has a local maximum, so $h(u_1) > 0$ since $h(0) > 0$. We now analyze the behavior of $h(u)$ at $u_2$ under various subcases.

\textbf{Subcase (ii.1):} $u_2 \geq 1$, $h(1) \geq 0$. Then $h(u) \geq 0$ on $(0,1)$, and $\Psi(u)$ is increasing on $(0,1)$.

To determine $u_2 \geq 1$, we require:
\begin{equation}\label{ineq1}
\sqrt{7\beta^2 - 20r\beta + 4r^2} \geq (3 - 2\gamma)\beta + 2r\gamma.
\end{equation}

Now we analyze the right-hand side:

- If $0 < \gamma \leq \frac{3}{2}$, or $\frac{3}{2} < \gamma < 1 + \sqrt{2}$ with $\beta \leq \frac{2r\gamma}{2\gamma - 3}$, then $(3 - 2\gamma)\beta + 2r\gamma \geq 0$;

- If $\gamma > 1 + \sqrt{2}$, or $\frac{3}{2} < \gamma < 1 + \sqrt{2}$ and $\beta > \frac{2r\gamma}{2\gamma - 3}$, then $(3 - 2\gamma)\beta + 2r\gamma < 0$.

Thus, $u_2 \geq 1$ is always satisfied when $(3 - 2\gamma)\beta + 2r\gamma < 0$. Otherwise, we square both sides of \eqref{ineq1}:
\[
\beta(\gamma^2 + 4\gamma - 3) \geq 4r\gamma(1 + \gamma),
\]
and so for $\gamma > \sqrt{7} - 2$, this gives:
\[
\beta \geq \frac{4r\gamma(1 + \gamma)}{\gamma^2 + 4\gamma - 3}.
\]

Now, we check the condition $h(1) \geq 0$:
\[
h(1) = (1 - 2\gamma - \gamma^2)\beta + 2r\gamma(1 + \gamma)^2.
\]

From this expression, we see that $h(1) > 0$ always holds if
\[
1 - 2\gamma - \gamma^2 \geq 0,
\]
which is equivalent to
\[
0 < \gamma \leq \sqrt{2} - 1.
\]

On the other hand, if
\[
1 - 2\gamma - \gamma^2 < 0, \quad \text{i.e.,} \quad \gamma > \sqrt{2} - 1,
\]
then the condition $h(1) \geq 0$ implies the following inequality:
\[
\beta \leq \frac{2r\gamma(1 + \gamma)^2}{\gamma^2 + 2\gamma - 1}.
\]

In addition, we recall the standing assumption $\beta > r(1 + \gamma)$. From this, we derive the following comparisons:

\begin{itemize}
    \item[(a)] If $\sqrt{7} - 2 \leq \gamma \leq \sqrt{3}$, then $r(1 + \gamma) \leq \frac{4r\gamma(1 + \gamma)}{\gamma^2 + 4\gamma - 3};$
    \item[(b)] If $0 < \gamma < \sqrt{7} - 2$ or $\gamma > \sqrt{3}$, then $r(1 + \gamma) > \frac{4r\gamma(1 + \gamma)}{\gamma^2 + 4\gamma - 3};$
    \item[(c)] If $\sqrt{2} - 1 \leq \gamma \leq \sqrt{7} - 2$ or $\gamma \geq 1$, then $\frac{4r\gamma(1 + \gamma)}{\gamma^2 + 4\gamma - 3} \leq \frac{2r\gamma(1 + \gamma)^2}{\gamma^2 + 2\gamma - 1};$
    \item[(d)] If $0 < \gamma < \sqrt{2} - 1$ or $\sqrt{7} - 2 < \gamma < 1$, then $\frac{4r\gamma(1 + \gamma)}{\gamma^2 + 4\gamma - 3} > \frac{2r\gamma(1 + \gamma)^2}{\gamma^2 + 2\gamma - 1};$
    \item[(e)] If $\gamma \geq \sqrt{2} - 1$, then $r(1 + \gamma) \leq \frac{2r\gamma(1 + \gamma)^2}{\gamma^2 + 2\gamma - 1};$
    \item[(f)] If $0 < \gamma < \sqrt{2} - 1$, then $r(1 + \gamma) > \frac{2r\gamma(1 + \gamma)^2}{\gamma^2 + 2\gamma - 1}.$
\end{itemize}

By combining conditions (a) and (c), we establish the proof of assertion (ii.1). From (b), (e), and (c), we obtain the proof of assertion (ii.2).

\textbf{Subcase (ii.2):} $u_2 \geq 1$, $h(1) < 0$. Then the equation $h(u) = 0$ has a unique root $\widehat{u}_1 \in (0,1)$. Therefore, the function $\Psi(u)$ is strictly increasing on $(0, \widehat{u}_1)$ and strictly decreasing on $(\widehat{u}_1, 1)$, and attains a local maximum at $\widehat{u}_1$.

In this situation, the number of positive fixed points of the operator depends on $\theta$:
\begin{itemize}
    \item If $0<\theta<\Psi(1)$ then the operator has a unique positive fixed point;
    \item If $\theta=\Psi(\widehat{u}_1)$ then the operator has a unique positive fixed point;
    \item If $\Psi(1)<\theta<\Psi(\widehat{u}_1)$ then the operator has two positive fixed points.
\end{itemize}

The condition $h(1) < 0$ holds if $\gamma > \sqrt{2} - 1$ and
\[
\beta > \frac{2r\gamma(1 + \gamma)^2}{\gamma^2 + 2\gamma - 1}.
\]
To ensure this and still respect $\beta > r(1 + \gamma)$, we must have
\[
\beta \geq \max \left\{ r(1 + \gamma), \frac{4r\gamma(1 + \gamma)}{\gamma^2 + 4\gamma - 3}, \frac{2r\gamma(1 + \gamma)^2}{\gamma^2 + 2\gamma - 1} \right\}.
\]

Taking into account conditions \textnormal{(a)} and \textnormal{(d)}, we obtain the proof of assertion \textnormal{(ii.3)}. Similarly, using conditions \textnormal{(c)} and \textnormal{(e)}, we establish the proof of assertion \textnormal{(ii.4)}.

For the following cases, it is also necessary to take into account the sign of the minimum value $h(u_2)$.

\textbf{Subcase (ii.3):} $u_2 < 1$, $h(u_2) \geq 0$. Since $h(u_2) \geq 0$ and $u_2 < 1$, and because $h(u)$ is decreasing on $(u_1, u_2)$ and increasing on $(u_2, \infty)$, it follows that $h(u) > 0$ for all $u \in (0,1)$. Hence, $\Psi(u)$ is increasing in $(0,1)$ and there is a unique positive fixed point if $0 < \theta < \Psi(1)$.

If $0<\gamma \leq \sqrt{7} - 2$, then the condition $u_2 < 1$ is always satisfied. On the other hand, if $\gamma > \sqrt{7} - 2$ and
\[
\beta < \frac{4r\gamma(1 + \gamma)}{\gamma^2 + 4\gamma - 3},
\]
then the inequality $u_2 < 1$ still holds.

Now we consider the inequality $h(u_2) \geq 0$:
\[
h(u_2) = \frac{2\gamma^3}{27\beta^2} \left( 8r^3 - 60r^2\beta + 96r\beta^2 - 17\beta^3 - \left(\sqrt{4r^2 - 20r\beta + 7\beta^2} \right)^3 \right).
\]
The condition $h(u_2) \geq 0$ implies:
\[
8r^3 - 60r^2\beta + 96r\beta^2 - 17\beta^3 - \left(\sqrt{4r^2 - 20r\beta + 7\beta^2} \right)^3 \geq 0.
\]

Dividing both sides by $r^3$ and setting
\[
x = \frac{\beta}{r},
\]
we obtain the inequality
\begin{equation} \label{ineq}
8 - 60x + 96x^2 - 17x^3 - \left(\sqrt{4 - 20x + 7x^2} \right)^3 \geq 0.
\end{equation}

We introduce the notations:

\begin{align*}
n(x) &= 8 - 60x + 96x^2 - 17x^3, \\
m(x) &= 4 - 20x + 7x^2.
\end{align*}

The polynomial
\[
n(x) = 8 - 60x + 96x^2 - 17x^3
\]
has three real roots approximately given by:
\[
x_1 \approx 0.188, \quad x_2 \approx 0.505, \quad x_3 \approx 4.953.
\]
Note that from the condition of the theorem, we have
\[
x > \frac{10 + 6\sqrt{2}}{7} \approx 2.64.
\]
Hence, for $\dfrac{10 + 6\sqrt{2}}{7} < x \leq x_3$, we have $n(x) \geq 0$.

Then, the inequality becomes:
\[
n(x) \geq m(x)^{3/2} \quad \Leftrightarrow \quad n(x)^2 \geq m(x)^3.
\]

We compute the expression:
\[
n^2 - m^3 = 54x^3(x^3 + 6x^2 - 42x + 8).
\]

The nonnegative roots of the equation $n^2 - m^3 = 0$ are:
\[
x = 0, \quad x = 4.
\]

Therefore, the solution of inequality \eqref{ineq} is:
\[
\frac{10 + 6\sqrt{2}}{7} < x \leq 4.
\]

This implies the following conclusion:

\begin{itemize}
  \item If $\dfrac{10 + 6\sqrt{2}}{7} r < \beta \leq 4r$, then $h(u_2) \geq 0$;
  \item If $\beta > 4r$, then $h(u_2) < 0$.
\end{itemize}

Thus, $u_2 < 1$, $h(u_2) \geq 0$ holds in the following cases:
\begin{itemize}
    \item $0 < \gamma \leq 1$ and $\frac{10 + 6\sqrt{2}}{7}r < \beta \leq 4r$;
    \item $1 < \gamma \leq \frac{3 + 6\sqrt{2}}{7}$ and $\frac{10 + 6\sqrt{2}}{7}r < \beta < \frac{4r\gamma(1 + \gamma)}{\gamma^2 + 4\gamma - 3}$;
    \item $\frac{3 + 6\sqrt{2}}{7} < \gamma < \sqrt{3}$ and $r(1 + \gamma) < \beta < \frac{4r\gamma(1 + \gamma)}{\gamma^2 + 4\gamma - 3}$.
\end{itemize}
In all these cases, $\Psi(u)$ is increasing and there exists a unique positive fixed point when \( 0 < \theta < \Psi(1) \). This proves assertions (ii.5)–(ii.7).

\textbf{Subcase (ii.4):} $u_2 < 1$, $h(u_2) < 0$, $h(1) \leq 0$. In this case, the equation $h(x) = 0$ has a unique solution $\widehat{u}_1$ in the interval $(0,1)$.  Therefore, $\Psi(u)$ has a local maximum at $\widehat{u}_1$. The number of fixed points depends on $\theta$, and there may be one or two positive fixed points. This occurs under the following conditions:
\begin{itemize}
    \item $\sqrt{2} - 1 < \gamma \leq \sqrt{7} - 2$ and $\beta \geq \frac{2r\gamma(1 + \gamma)^2}{\gamma^2 + 2\gamma - 1}$;
    \item $\sqrt{7} - 2 < \gamma < 1$ and $\frac{2r\gamma(1 + \gamma)^2}{\gamma^2 + 2\gamma - 1} \leq \beta < \frac{4r\gamma(1 + \gamma)}{\gamma^2 + 4\gamma - 3}$.
\end{itemize}
This completes the proof of assertions (ii.8) and (ii.9).

\textbf{Subcase (ii.5):} $u_2 < 1$, $h(u_2) < 0$, $h(1) > 0$. In this case, $h(u)$ has two zeros $\widehat{u}_1 < \widehat{u}_2$ in $(0,1)$ and $h(1) > 0$, so $\Psi(u)$ increases on $(0, \widehat{u}_1)\cup(\widehat{u}_2, 1)$ and decreases on $(\widehat{u}_1, \widehat{u}_2)$. Thus, $\Psi(u)$ has both a local maximum and a local minimum in $(0,1)$, and the operator may have one, two, or three positive fixed points depending on $\theta$.

From earlier, we showed $h(u_2) < 0$ if and only if $\beta > 4r$. So the full condition for this case is:
\begin{itemize}
    \item $0 < \gamma \leq \sqrt{2} - 1$ and $\beta > 4r$;
    \item $\sqrt{2} - 1 < \gamma < 1$ and $4r < \beta < \frac{2r\gamma(1 + \gamma)^2}{\gamma^2 + 2\gamma - 1}$.
\end{itemize}

This concludes the full proof of all subcases (ii.1)–(ii.11).

\end{proof}

\section{Type of fixed points}

The following lemma is useful for the analysis of the fixed points.

\begin{lemma}[Lemma 2.1, \cite{Cheng}]\label{lem1}
Let \( F(\lambda) = \lambda^2 + B\lambda + C \), where \( B \) and \( C \) are real constants. Suppose \( \lambda_1 \) and \( \lambda_2 \) are the roots of the equation \( F(\lambda) = 0 \). Then the following statements hold:
\begin{itemize}
    \item[(i)] If \( F(1) > 0 \), then:
    \begin{itemize}
        \item[(i.1)] \( |\lambda_1| < 1 \) and \( |\lambda_2| < 1 \) if and only if \( F(-1) > 0 \) and \( C < 1 \);
        \item[(i.2)] \( \lambda_1 = -1 \) and \( \lambda_2 \neq -1 \) if and only if \( F(-1) = 0 \) and \( B \neq 2 \);
        \item[(i.3)] \( |\lambda_1| < 1 \), \( |\lambda_2| > 1 \) if and only if \( F(-1) < 0 \);
        \item[(i.4)] \( |\lambda_1| > 1 \) and \( |\lambda_2| > 1 \) if and only if \( F(-1) > 0 \) and \( C > 1 \);
        \item[(i.5)] \( \lambda_1 \) and \( \lambda_2 \) are complex conjugates with \( |\lambda_1| = |\lambda_2| = 1 \) if and only if \( -2 < B < 2 \) and \( C = 1 \);
        \item[(i.6)] \( \lambda_1 = \lambda_2 = -1 \) if and only if \( F(-1) = 0 \) and \( B = 2 \).
    \end{itemize}

    \item[(ii)] If \( F(1) = 0 \), i.e., \( 1 \) is a root of \( F(\lambda) = 0 \), then the other root \( \lambda \) satisfies:
    \[
    |\lambda| \begin{cases}
    < 1, & \text{if } |C| < 1, \\
    = 1, & \text{if } |C| = 1, \\
    > 1, & \text{if } |C| > 1.
    \end{cases}
    \]

    \item[(iii)] If \( F(1) < 0 \), then the equation \( F(\lambda) = 0 \) has one root in the interval \( (1, \infty) \). Moreover:
    \begin{itemize}
        \item[(iii.1)] The other root \( \lambda \) satisfies \( \lambda \leq -1 \) if and only if \( F(-1) \leq 0 \);
        \item[(iii.2)] The other root \( \lambda \) satisfies \( -1 < \lambda < 1 \) if and only if \( F(-1) > 0 \).
    \end{itemize}
\end{itemize}
\end{lemma}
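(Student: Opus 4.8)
The plan is to reduce every assertion to elementary relations between the coefficients of $F$ and the position of its roots. Writing $F(\lambda)=(\lambda-\lambda_1)(\lambda-\lambda_2)$ we have
\[
F(1)=(1-\lambda_1)(1-\lambda_2),\quad F(-1)=(1+\lambda_1)(1+\lambda_2),\quad C=\lambda_1\lambda_2,\quad B=-(\lambda_1+\lambda_2),
\]
and hence also $F(1)F(-1)=(1-\lambda_1^2)(1-\lambda_2^2)$. The strategy is: first dispose of the case of a complex conjugate pair of roots, and then, for real roots, locate each root relative to $\pm1$ from the signs of $F(1)$, $F(-1)$ and $C-1$, keeping in mind that $F(1)>0$ (resp.\ $<0$) means $1$ does not (resp.\ does) lie strictly between two real roots, and likewise for $F(-1)$ and $-1$.

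Suppose first that the roots form a complex conjugate pair, i.e.\ $B^2<4C$. Then $\lambda_1\lambda_2=|\lambda_1|^2=|\lambda_2|^2=C$, so $C>0$ and the common modulus is $<1$, $=1$ or $>1$ according as $C<1$, $C=1$ or $C>1$; moreover $C>B^2/4$ gives $F(\pm1)=1\pm B+C>(1\pm B/2)^2\ge0$. Thus a complex pair always falls under part (i), and there $|\lambda_1|=|\lambda_2|=1$ holds iff $C=1$, which---forcing $B^2<4$---is equivalent to $-2<B<2$; this is exactly (i.5). A complex pair of modulus $>1$ is precisely the case $C>1$ (with $F(-1)>0$ automatic), absorbed into (i.4). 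Conversely $C=1$ with $-2<B<2$ yields $B^2<4C$, hence a complex pair of modulus $1$.

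Now let $\lambda_1,\lambda_2$ be real. Under part (i) assume in addition $F(-1)>0$: since $F(1)F(-1)=(1-\lambda_1^2)(1-\lambda_2^2)>0$, either both roots lie in $(-1,1)$ or both lie outside $[-1,1]$; the configuration $\lambda_1>1$, $\lambda_2<-1$ is excluded because $F(1)>0$ forbids $1$ from separating the roots, so ``outside'' means both $>1$ or both $<-1$, whence $C>1$, while ``inside'' gives $C<1$. Hence $F(-1)>0$ with $C<1$ characterizes both roots in $(-1,1)$, which is (i.1), and $F(-1)>0$ with $C>1$ characterizes $|\lambda_1|,|\lambda_2|>1$ (now also incorporating the complex case), which is (i.4). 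If instead $F(-1)<0$, the roots are real with $-1$ between them, say $\lambda_1<-1<\lambda_2$, and $F(1)>0$ then forces $\lambda_2<1$, so $|\lambda_1|>1>|\lambda_2|$; the converse is checked the same way, giving (i.3). If $F(-1)=0$, then $-1$ is a root and, by the trace relation, the other root is $1-B$, which equals $-1$ exactly when $B=2$; this gives (i.2) and (i.6). Part (ii) is immediate: $F(1)=0$ makes $1$ a root, and by the product relation the other root is $C$. Under part (iii), $F(1)<0$ forces real roots (a complex pair has $F(1)>0$) with $1$ between them, so there is a root $\lambda_1>1$; then $1+\lambda_1>0$, so $F(-1)=(1+\lambda_1)(1+\lambda_2)$ has the sign of $1+\lambda_2$, which gives $\lambda_2\le-1\iff F(-1)\le0$ and $-1<\lambda_2<1\iff F(-1)>0$ (the bound $\lambda_2<1$ being automatic): these are (iii.1) and (iii.2).

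The one genuine task is the bookkeeping across the twelve ``if and only if'' items, in each of which one must decide whether the complex conjugate configuration is to be included (as in (i.4)--(i.5)) or excluded (as in (i.3) and (iii)). Once the displayed identities are recorded, together with the observation that a complex pair forces $F(1)>0$, $F(-1)>0$ and $C>0$, every item reduces to a short sign argument and no analytic difficulty remains.
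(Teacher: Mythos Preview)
Your argument is correct: recording the Vieta relations and the factorizations $F(1)=(1-\lambda_1)(1-\lambda_2)$, $F(-1)=(1+\lambda_1)(1+\lambda_2)$, separating the complex-conjugate case (where $F(\pm1)>0$ and $C=|\lambda_i|^2$), and then doing sign bookkeeping for the real case covers every item cleanly. In particular, your handling of the ``inside/outside'' dichotomy under (i) when $F(-1)>0$, and of the sign of $1+\lambda_2$ under (iii), is exactly what is needed.

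The paper itself does not prove this lemma; it is quoted verbatim from \cite{Cheng} and used as a black box in the stability and bifurcation analysis. So there is no ``paper's own proof'' to compare against---your self-contained elementary argument simply supplies what the paper outsources. The only minor presentational point is that in (i.2)/(i.6) you might also note that $F(1)>0$ together with $F(-1)=0$ forces $B>0$ (indeed $F(1)=2B$ there), so the case distinction $B=2$ versus $B\neq2$ is within the allowed range; this is implicit in your argument but could be made explicit.
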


\begin{pro}
For the fixed points \( (0, 0) \) and \( (1, 0) \) of system~\eqref{h12}, the following statements hold:
\[
(0, 0) =
\begin{cases}
\text{nonhyperbolic}, & \text{if } r = 2, \\[2mm]
\text{saddle}, & \text{if } 0 < r < 2, \\[2mm]
\text{repelling}, & \text{if } r > 2,
\end{cases}
\]
\[
(1, 0) =
\begin{cases}
\text{nonhyperbolic}, & \text{if } \beta = \left(r+\frac{\theta}{1+\gamma^2}\right)(1+\gamma) \text{ or } \beta = \left(r-2+\frac{\theta}{1+\gamma^2}\right)(1+\gamma), \\[2mm]
\text{attractive}, & \text{if } \left(r-2+\frac{\theta}{1+\gamma^2}\right)(1+\gamma) < \beta < \left(r+\frac{\theta}{1+\gamma^2}\right)(1+\gamma), \\[2mm]
\text{saddle}, & \text{otherwise}.
\end{cases}
\]
\end{pro}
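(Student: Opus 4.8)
The plan is to linearize the map~\eqref{h12} at each boundary equilibrium and read off the spectrum. Writing the map as
\[
W(u,v)=\Big(u(2-u)-\tfrac{uv}{\gamma+u},\ \tfrac{\beta uv}{\gamma+u}+(1-r)v-\tfrac{\theta u^2 v}{\gamma^2+u^2}\Big),
\]
I would first record its Jacobian
\[
J(u,v)=\begin{pmatrix}
2-2u-\dfrac{\gamma v}{(\gamma+u)^2} & -\dfrac{u}{\gamma+u}\\[3mm]
\dfrac{\beta\gamma v}{(\gamma+u)^2}-\dfrac{2\theta\gamma^2 u v}{(\gamma^2+u^2)^2} & \dfrac{\beta u}{\gamma+u}+(1-r)-\dfrac{\theta u^2}{\gamma^2+u^2}
\end{pmatrix},
\]
the only mildly delicate step being the simplification $\dfrac{d}{du}\dfrac{u^2}{\gamma^2+u^2}=\dfrac{2\gamma^2 u}{(\gamma^2+u^2)^2}$.

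For $(0,0)$ every off-diagonal entry vanishes and $J(0,0)=\operatorname{diag}(2,\,1-r)$, so the eigenvalues are $\lambda_1=2$ and $\lambda_2=1-r$. Since $|\lambda_1|=2>1$ unconditionally, the classification is governed solely by $|1-r|$: the point is nonhyperbolic exactly when $|1-r|=1$, i.e. $r=2$ (the root $r=0$ being excluded by the standing positivity assumption); a saddle when $|1-r|<1$, i.e. $0<r<2$; and repelling when $|1-r|>1$, i.e. $r>2$. This yields the first case.

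For $(1,0)$ the relations $v=0$ and $u=1$ kill the $(2,1)$ entry and the term $2-2u$, so $J(1,0)$ is upper triangular with diagonal entries $0$ and $\lambda_2:=\dfrac{\beta}{1+\gamma}+(1-r)-\dfrac{\theta}{1+\gamma^2}$; hence its eigenvalues are $0$ and $\lambda_2$. Because $0$ always lies strictly inside the unit circle, the type of $(1,0)$ depends only on the position of $\lambda_2$ relative to $\pm1$. Solving $\lambda_2=1$ gives $\beta=(1+\gamma)\big(r+\tfrac{\theta}{1+\gamma^2}\big)$ and $\lambda_2=-1$ gives $\beta=(1+\gamma)\big(r-2+\tfrac{\theta}{1+\gamma^2}\big)$; these are precisely the nonhyperbolic thresholds. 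The double inequality $-1<\lambda_2<1$ rearranges, upon multiplying through by $1+\gamma>0$, to $(1+\gamma)\big(r-2+\tfrac{\theta}{1+\gamma^2}\big)<\beta<(1+\gamma)\big(r+\tfrac{\theta}{1+\gamma^2}\big)$, the attractivity region; outside the closure of this interval $|\lambda_2|>1$ and the point is a saddle. (One could instead invoke Lemma~\ref{lem1} with the characteristic polynomial $F(\lambda)=\lambda(\lambda-\lambda_2)$, but since the spectrum is explicit this is unnecessary.) There is no genuine obstacle here; the only points requiring care are the algebraic simplification of the toxin term's derivative and keeping the inequality directions straight when clearing the positive factor $1+\gamma$.
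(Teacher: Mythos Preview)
Your proof is correct and follows essentially the same approach as the paper: compute the Jacobian, evaluate at each boundary equilibrium to obtain a triangular matrix, and classify via the explicit eigenvalues. Your write-up is in fact slightly cleaner---you correctly identify $J(1,0)$ as upper (not lower) triangular and carry the factor $u$ in the $(2,1)$ toxin term of the general Jacobian, both of which are harmless slips in the paper's text.
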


\begin{proof}
The Jacobian matrix of the operator \eqref{h12} is given by
\begin{equation}\label{jac01}
J(u,v) =
\begin{bmatrix}
2 - 2u - \dfrac{\gamma v}{(\gamma+u)^2} & -\dfrac{u}{\gamma+u} \\
\dfrac{\beta \gamma v}{(\gamma+u)^2} - \dfrac{2\theta \gamma^2 v}{(\gamma^2+u^2)^2} & \dfrac{\beta u}{\gamma+u} - \dfrac{\theta u^2}{\gamma^2+u^2} + 1 - r
\end{bmatrix}.
\end{equation}

Evaluating the Jacobian at the fixed point \( (0,0) \), we obtain
\[
J(0,0) =
\begin{bmatrix}
2 & 0 \\
0 & 1 - r
\end{bmatrix},
\]
whose eigenvalues are \( \lambda_1 = 2 \) and \( \lambda_2 = 1 - r \). Since \( \lambda_1 = 2 > 1 \), the instability of the fixed point \( (0,0) \) is immediate.

Next, consider the fixed point \( (1,0) \). The Jacobian at this point becomes
\[
J(1,0) =
\begin{bmatrix}
0 & -\dfrac{1}{1+\gamma} \\
0 & \dfrac{\beta}{1+\gamma} - \dfrac{\theta}{1+\gamma^2} + 1 - r
\end{bmatrix},
\]
which is a lower triangular matrix. The eigenvalues are therefore
\[
\lambda_1 = 0, \quad \lambda_2 = \dfrac{\beta}{1+\gamma} - \dfrac{\theta}{1+\gamma^2} + 1 - r.
\]

Stability of the fixed point \( (1,0) \) depends on the condition \( |\lambda_2| < 1 \), while \( |\lambda_2| = 1 \) indicates a bifurcation threshold. These yield conditions on the parameters \( \beta, \theta, r, \gamma \), which determine the local dynamics near \( (1,0) \).

This completes the proof.
\end{proof}

Let $E_1=(u_{1}, v_{1})$, $E_2=(u_{2}, v_{2})$, and $E_{3}=(u_{3}, v_{3})$ be positive fixed points of the operator \eqref{h12} such that $u_{1} < u_{2} < u_{3}$, where $u_i$ is a positive solution of the equation $\theta=\Psi(u)$ and $v_i=(1-u_i)(\gamma+u_i)$ for any $i=1,2,3.$

 Using $v=(1-u)(\gamma+u)$ we find the Jacobian of the operator \eqref{h12} at any positive fixed point:
\begin{equation}\label{jac}
J(u) =
\begin{bmatrix}
\frac{(1-u)(\gamma+2u)}{\gamma+u} & -\frac{u}{\gamma+u} \\
\gamma(1-u)(\gamma+u)\left(\frac{\beta}{(\gamma+u)^2}-\frac{2\theta\gamma u}{(\gamma^2+u^2)^2}\right) &  1
\end{bmatrix}.
\end{equation}

Its characteristic polynomial is

\begin{equation}\label{F}
F(\lambda,u)=\lambda^2-p(u)\lambda+q(u)
\end{equation}

where
\begin{align}
p(u) &= 1 + \frac{(1 - u)(\gamma + 2u)}{\gamma + u}, \label{pu}\\
q(u) &= \frac{(1 - u)(\gamma + 2u)}{\gamma + u}
+ \gamma u(1 - u) \left( \frac{\beta}{(\gamma + u)^2} - \frac{2\theta \gamma u}{(\gamma^2 + u^2)^2} \right) \label{qu}
\end{align}

The following theorem classifies all fixed points of the operator (\ref{h12}).

\begin{thm}\label{type}
Let $F(\lambda, u)$ be defined as in~\eqref{F}, and let \( q(u) \) be defined as in~\eqref{qu}.
Consider the fixed points $E_i = (u_i, v_i)$ for $i = 1, 2, 3$ of the operator~\eqref{h12}. Then the following statements hold:

\begin{itemize}
    \item[(i)] The type of the fixed point $E_1$ is determined by:
    \[
    E_1 =
    \begin{cases}
    \text{attracting}, & \text{if } q(u_1) < 1, \\
    \text{repelling}, & \text{if } q(u_1) > 1, \\
    \text{nonhyperbolic}, & \text{if } q(u_1) = 1.
    \end{cases}
    \]

    \item[(ii)] The type of the fixed point $E_2$ is determined by:
    \[
    E_2 =
    \begin{cases}
    \text{saddle}, & \text{if } F(-1, u_2) > 0, \\
    \text{repelling}, & \text{if } F(-1, u_2) < 0, \\
    \text{nonhyperbolic}, & \text{if } F(-1, u_2) = 0.
    \end{cases}
    \]

    \item[(iii)] The type of the fixed point $E_3$ is determined by:
    \[
    E_3 =
    \begin{cases}
    \text{attracting}, & \text{if } q(u_3) < 1, \\
    \text{repelling}, & \text{if } q(u_3) > 1, \\
    \text{nonhyperbolic}, & \text{if } q(u_3) = 1.
    \end{cases}
    \]
\end{itemize}
\end{thm}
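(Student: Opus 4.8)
The plan is to deduce the type of each $E_i$ directly from its characteristic polynomial $F(\lambda,u)=\lambda^{2}-p(u)\lambda+q(u)$ by applying Lemma~\ref{lem1} with $B=-p(u)$ and $C=q(u)$. The whole argument hinges on one identity, valid at any positive fixed point (where $\theta=\Psi(u)$): starting from $F(1,u)=1-p(u)+q(u)$, the common term $\frac{(1-u)(\gamma+2u)}{\gamma+u}$ cancels, and after substituting $\theta=\Psi(u)$ into what remains the numerator simplifies to the cubic $h(u)$ of~\eqref{hx}, giving
\[
F(1,u)=\frac{\gamma(1-u)\,h(u)}{(\gamma+u)^{2}(\gamma^{2}+u^{2})}=\frac{u^{3}(1-u)}{\gamma^{2}+u^{2}}\,\Psi'(u),
\]
the last step using the formula for $\Psi'(u)$ from Section~2. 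I would establish this identity first; it is the only place where an algebraic slip would be fatal, and everything afterward is routine bookkeeping with Lemma~\ref{lem1}.

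Next I would extract the sign information this provides. Every positive fixed point has $v_i=(1-u_i)(\gamma+u_i)>0$, hence $u_i\in(0,1)$, so the prefactor $\frac{u_i^{3}(1-u_i)}{\gamma^{2}+u_i^{2}}$ is positive and $\operatorname{sign}F(1,u_i)=\operatorname{sign}\Psi'(u_i)$. From the behaviour of $\Psi$ obtained in the proof of Theorem~\ref{thm2} — in particular the location of its critical points $\widehat u_1<\widehat u_2$ relative to the fixed points — $u_1$ sits on an increasing branch of $\Psi$ and so does $u_3$, while $u_2$ (when present) sits on a decreasing branch; therefore $F(1,u_1)>0$, $F(1,u_3)>0$ and $F(1,u_2)<0$. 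I would also record that on $(0,1)$ one has $p(u)=1+\frac{(1-u)(\gamma+2u)}{\gamma+u}>1$, so from $F(-1,u)=F(1,u)+2p(u)$ we get $F(-1,u_1)>0$ and $F(-1,u_3)>0$ automatically.

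The classification then follows case by case from Lemma~\ref{lem1}. For $E_1$, and identically for $E_3$: $F(1,u_1)>0$ lands us in part~(i), and since also $F(-1,u_1)>0$, part~(i.1) gives ``attracting'' exactly when $q(u_1)<1$ and part~(i.4) gives ``repelling'' exactly when $q(u_1)>1$; in the borderline case $q(u_1)=1$ the strict inequality $F(1,u_1)>0$ forces $p(u_1)<2$, which together with $p(u_1)>1$ yields $-2<p(u_1)<2$, so part~(i.5) applies and $E_1$ is nonhyperbolic with a conjugate eigenvalue pair on the unit circle. For $E_2$: $F(1,u_2)<0$ lands us in part~(iii), so one eigenvalue already lies in $(1,\infty)$ and the other is governed entirely by the sign of $F(-1,u_2)$ — lying in $(-1,1)$ (saddle) when $F(-1,u_2)>0$ by (iii.2), below $-1$ (repelling) when $F(-1,u_2)<0$ by (iii.1), and equal to $-1$ (nonhyperbolic) when $F(-1,u_2)=0$ — which is precisely statement~(ii).

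I expect the identity in the first paragraph to be the only genuine obstacle. One should also keep in mind that the degenerate parameter values $\theta=\Psi(\widehat u_1)$ and $\theta=\Psi(\widehat u_2)$, at which a fixed point coincides with a critical point of $\Psi$ and $F(1,\cdot)$ vanishes there, fall outside the scope of the theorem — the listed fixed points are then not all distinct — so they require no separate treatment.
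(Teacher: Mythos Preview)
Your proof is correct and follows essentially the same route as the paper: both reduce the sign of $F(1,u_i)$ to the sign of $h(u_i)$ (the paper via an auxiliary function $\varphi$ with $\varphi-\Psi\propto h$, you via the direct substitution $\theta=\Psi(u)$ yielding $F(1,u)\propto h(u)\propto\Psi'(u)$), and then both invoke Lemma~\ref{lem1} in the same way. Your packaging through $\Psi'$ is a touch cleaner, and your explicit justification of $F(-1,u_i)>0$ via $F(-1,u)=F(1,u)+2p(u)$ fills in a step the paper leaves implicit.
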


\begin{proof}
Recall that $\widehat{u}_1$ and $\widehat{u}_2$ are the critical points of the function $\Psi(u)$, and  $\Psi(u_i) = \theta$ for all $i = 1, 2, 3$. Moreover, the function $h(u)$ defined in \eqref{hx} satisfies
\[
h(u) > 0 \quad \text{for} \quad u \in (0, \widehat{u}_1) \cup (\widehat{u}_2, 1), \qquad h(u) < 0 \quad \text{for} \quad u \in (\widehat{u}_1, \widehat{u}_2).
\]
It is evident that
\[
u_1 \in (0, \widehat{u}_1), \quad u_2 \in (\widehat{u}_1, \widehat{u}_2), \quad \text{and} \quad u_3 \in (\widehat{u}_2, 1).
\]

\begin{itemize}
    \item[(i)] We show that $F(1, u_1) > 0,$ $F(1, u_2) < 0$ and $F(1, u_3) > 0$.

    Observe that
    \[
    F(1,u) = \gamma u(1 - u) \left( \frac{\beta}{(\gamma + u)^2} - \frac{2\theta \gamma u}{(\gamma^2 + u^2)^2} \right).
    \]
    The inequality $F(1,u) > 0$ is equivalent to
    \[
    \theta < \frac{\beta(\gamma^2 + u^2)^2}{2\gamma u (\gamma + u)^2}.
    \]
    Define the function
    \[
    \varphi(x) = \frac{\beta(\gamma^2 + x^2)^2}{2\gamma x (\gamma + x)^2},
    \]
    and consider the difference $\varphi(x) - \Psi(x)$:
    \begin{equation}
    \varphi(x) - \Psi(x) = \frac{(\gamma^2 + x^2) h(x)}{2\gamma x^2 (\gamma + x)^2}.
    \end{equation}
    Since $h(u_1) > 0$, $h(u_2) < 0$, and $h(u_3) > 0$, and noting that $\Psi(u_i) = \theta$ for all $i = 1, 2, 3$, we conclude:
  \[
\varphi(u_1) > \theta \quad \Rightarrow \quad F(1, u_1) > 0,
\]
\[
\varphi(u_2) < \theta \quad \Rightarrow \quad F(1, u_2) < 0,
\]
\[
\varphi(u_3) > \theta \quad \Rightarrow \quad F(1, u_3) > 0.
\]

    Furthermore, since $F(1, u_1) > 0$ (respectively, $F(1, u_3) > 0$), it follows that $F(-1, u_1) > 0$ (respectively, $F(-1, u_3) > 0$). Hence, by Lemma~\ref{lem1}, this case is proved.

 \item[(ii)] As shown above, $F(1, u_2) < 0$, which implies that one eigenvalue of the Jacobian matrix at the fixed point $E_2$ lies in the interval $(1, \infty)$. The nature of the second eigenvalue depends on the sign of $F(-1, u_2)$. Therefore, by Lemma~\ref{lem1}, this case is established.

\item[(iii)] From the previous analysis, we have $F(1, u_3) > 0$ and $F(-1, u_3) > 0$. Thus, by Lemma~\ref{lem1}, this case is also established.

\medskip
The proof is complete.

 \end{itemize}

\end{proof}

\noindent\textbf{Conjecture.} The fixed point \( E_2 \) is always a saddle point if it exists, that is, \( F(-1, u_2) > 0 \).

\section{bifurcation analysis and chaos control}

\subsection{Neimark--Sacker Bifurcation at the Positive Fixed Points}

In this subsection, we investigate the conditions under which a Neimark--Sacker  bifurcation occurs at the positive fixed points
\( E_1 = (u_1, v_1) \) or \( E_3 = (u_3, v_3) \). Let \( \overline{E} = (\overline{u}, \overline{v}) \) denote either of these points.

According to Theorem~\ref{type} and Lemma~\ref{lem1}, the Jacobian matrix evaluated at \( \overline{E} \) has a pair of complex conjugate eigenvalues with modulus one if the condition \( q(\overline{u}) = 1 \) is satisfied, where \( q(\overline{u}) \) is defined in equation~\eqref{qu}. We choose \( \theta \) as the bifurcation parameter and let \( \theta_0 \) be a solution to the equation \( q(\overline{u}) = 1 \).

We will now show that as the parameter \( \theta \) varies in a sufficiently small neighborhood of \( \theta_0 \), the fixed point \( \overline{E} \) undergoes a Neimark--Sacker bifurcation. The argument proceeds in the following steps:

\textbf{Step 1.} Introduce the change of variables \( x = u - \overline{u} \), \( y = v - \overline{v} \), which shifts the fixed point \( \overline{E} \) to the origin. Then system~\eqref{h12} becomes:

\begin{equation}\label{bif1}
\left\{
\begin{aligned}
x^{(1)} &= (x + \overline{u}) \left(2 - x - \overline{u} - \frac{y + \overline{v}}{\gamma + x + \overline{u}}\right) - \overline{u}, \\
y^{(1)} &= (y + \overline{v}) \left( \frac{\beta(x + \overline{u})}{\gamma + x + \overline{u}} - \frac{\theta(x + \overline{u})^2}{\gamma^2 + (x + \overline{u})^2} + 1 - r \right) - \overline{v}.
\end{aligned}
\right.
\end{equation}

\textbf{Step 2.} Introduce a small perturbation \( \theta^* \) to the parameter \( \theta \), such that \( \theta = \theta_0 + \theta^* \). The perturbed system then becomes:

\begin{equation}\label{bif2}
\left\{
\begin{aligned}
x^{(1)} &= (x + \overline{u}) \left(2 - x - \overline{u} - \frac{y + \overline{v}}{\gamma + x + \overline{u}}\right) - \overline{u}, \\
y^{(1)} &= (y + \overline{v}) \left( \frac{\beta(x + \overline{u})}{\gamma + x + \overline{u}} - \frac{(\theta_0 + \theta^*)(x + \overline{u})^2}{\gamma^2 + (x + \overline{u})^2} + 1 - r \right) - \overline{v}.
\end{aligned}
\right.
\end{equation}

The Jacobian matrix of system~\eqref{bif2} at the origin is:

\begin{equation}\label{jac2}
J(\overline{u}) =
\begin{bmatrix}
\dfrac{(1 - \overline{u})(\gamma + 2\overline{u})}{\gamma + \overline{u}} & -\dfrac{\overline{u}}{\gamma + \overline{u}} \\
\gamma(1 - \overline{u})(\gamma + \overline{u})\left( \dfrac{\beta}{(\gamma + \overline{u})^2} - \dfrac{2(\theta_0 + \theta^*)\gamma \overline{u}}{(\gamma^2 + \overline{u}^2)^2} \right) & 1 - \dfrac{\theta^* \overline{u}^2}{\gamma^2 + \overline{u}^2}
\end{bmatrix}.
\end{equation}

The corresponding characteristic equation is:
\[
\lambda^2 - a(\theta^*) \lambda + b(\theta^*) = 0,
\]
where the trace and determinant are given by:
\[
a(\theta^*) = \operatorname{Tr}(J) = 1 + \frac{(1 - \overline{u})(\gamma + 2\overline{u})}{\gamma + \overline{u}} - \frac{\theta^* \overline{u}^2}{\gamma^2 + \overline{u}^2} = p(\overline{u}) - \frac{\theta^* \overline{u}^2}{\gamma^2 + \overline{u}^2},
\]
and using \( q(\overline{u}) = 1 \), we find:
\[
b(\theta^*) = \det(J) = 1 - \frac{\theta^* \overline{u}^2(1 - \overline{u})(2\overline{u}^3 + \gamma\overline{u}^2 + 4\gamma^2 \overline{u} + 3\gamma^3)}{(\gamma + \overline{u})(\gamma^2 + \overline{u}^2)^2}.
\]

Since \( a(0) = p(\overline{u}) < 2 \) and \( b(0) = 1 \), the characteristic roots are complex conjugate with modulus close to one:
\begin{equation}\label{bif5}
\lambda_{1,2} = \frac{1}{2}\left[ a(\theta^*) \pm i \sqrt{4b(\theta^*) - a^2(\theta^*)} \right],
\end{equation}
with \(|\lambda_{1,2}| = \sqrt{b(\theta^*)}\).

Differentiating, we obtain:
\begin{equation}\label{bif7}
\frac{d|\lambda_{1,2}|}{d\theta^*}\Big|_{\theta^* = 0} = -\frac{\overline{u}^2(1 - \overline{u})(2\overline{u}^3 + \gamma\overline{u}^2 + 4\gamma^2 \overline{u} + 3\gamma^3)}{2(\gamma + \overline{u})(\gamma^2 + \overline{u}^2)^2} < 0.
\end{equation}

Hence, the transversality condition
\[
\frac{d|\lambda_{1,2}|}{d\theta^*} \Big|_{\theta^* = 0} \neq 0
\]
is satisfied. Moreover, since \( 1 < a(0) < 2 \) and \( b(0) = 1 \), it follows that \( \lambda_{1,2}^m(0) \neq 1 \) for \( m = 1, 2, 3, 4 \), ensuring the non-resonance condition.

Therefore, all the classical conditions for the occurrence of a Neimark--Sacker bifurcation at the fixed point \( \overline{E} \) are satisfied.

\textbf{Step 3.} To derive the normal form of system~\eqref{bif2} at the critical parameter value \( \theta = \theta_0 \), we expand it into a third-order Taylor series around the fixed point \( (x, y) = (0, 0) \). This yields:
\begin{equation}\label{bif8}
\left\{
\begin{aligned}
x^{(1)} &= a_{10}x + a_{01}y + a_{20}x^2 + a_{11}xy + a_{02}y^2 + a_{30}x^3 + a_{21}x^2y + a_{12}xy^2 + a_{03}y^3 + \mathcal{O}(\rho^4), \\
y^{(1)} &= b_{10}x + b_{01}y + b_{20}x^2 + b_{11}xy + b_{02}y^2 + b_{30}x^3 + b_{21}x^2y + b_{12}xy^2 + b_{03}y^3 + \mathcal{O}(\rho^4),
\end{aligned}
\right.
\end{equation}
where \( \rho = \sqrt{x^2 + y^2} \), and the coefficients are given by:
\begin{align*}
&a_{10} = \frac{(1 - \overline{u})(\gamma + 2\overline{u})}{\gamma + \overline{u}},
&& a_{01} = -\frac{\overline{u}}{\gamma + \overline{u}}, \\
&a_{20} = \frac{\gamma(1 - 3\overline{u}) - \gamma^2 - \overline{u}^2}{(\gamma + \overline{u})^2},
&& a_{11} = -\frac{\gamma}{(\gamma + \overline{u})^2}, \\
&a_{30} = \frac{\gamma(\overline{u} - 1)}{(\gamma + \overline{u})^3},
&& a_{21} = \frac{\gamma}{(\gamma + \overline{u})^3}, \\
&a_{02} = a_{03} = a_{12} = 0, \\[5pt]
&b_{10} = \gamma (1 - \overline{u}) \left( \frac{\beta}{\gamma + \overline{u}} - \frac{2 \gamma \theta_0 \overline{u}(\gamma + \overline{u})}{(\gamma^2 + \overline{u}^2)^2} \right),
&& b_{01} = 1, \\
&b_{20} = -\gamma (1 - \overline{u}) \left( \frac{\beta}{(\gamma + \overline{u})^2} + \frac{\gamma \theta_0 (\gamma + \overline{u})(\gamma^2 - 3\overline{u}^2)}{(\gamma^2 + \overline{u}^2)^2} \right), \\
&b_{11} = \gamma \left( \frac{\beta}{(\gamma + \overline{u})^2} - \frac{2 \gamma \theta_0 \overline{u}}{(\gamma^2 + \overline{u}^2)^2} \right), \\
&b_{30} = \gamma (1 - \overline{u}) \left( \frac{\beta}{(\gamma + \overline{u})^3} + \frac{4 \gamma \theta_0 \overline{u} (\gamma - \overline{u})(\gamma + \overline{u})^2}{(\gamma^2 + \overline{u}^2)^4} \right), \\
&b_{21} = -\gamma \left( \frac{\beta}{(\gamma + \overline{u})^3} + \frac{\gamma \theta_0 (\gamma^2 - 3\overline{u}^2)}{(\gamma^2 + \overline{u}^2)^3} \right), \\
&b_{02} = b_{03} = b_{12} = 0.
\end{align*}

The Jacobian matrix at the fixed point \( \overline{E} = (\overline{u}, \overline{v}) \) is:
\[
J(\overline{E}) =
\begin{bmatrix}
a_{10} & a_{01} \\
b_{10} & b_{01}
\end{bmatrix}.
\]

Using the condition \( q(\overline{u}) = 1 \) at the critical parameter \( \theta_0 \), the corresponding eigenvalues of \( J(\overline{E}) \) are:
\[
\lambda_{1,2} = \frac{1 + a_{10} \mp i \alpha}{2}, \quad \text{where } \alpha = \sqrt{3 - a_{10}^2 - 2a_{10}}.
\]
Since \( 1 < 1 + a_{10} = p(\overline{u}) < 2 \), it follows that \( 3 - a_{10}^2 - 2a_{10} = 4 - (1 + a_{10})^2 > 0 \), ensuring that the eigenvalues are complex conjugates with modulus 1.

The corresponding complex eigenvectors are:
\[
v_{1,2} =
\begin{bmatrix}
- \dfrac{\overline{u}}{2(\gamma + \overline{u})} \\
1
\end{bmatrix}
\mp i
\begin{bmatrix}
\dfrac{\alpha}{2(2\overline{u} + \gamma - 1)} \\
0
\end{bmatrix}.
\]

\textbf{Step 4.} To compute the normal form of system~\eqref{bif2}, we rewrite system~\eqref{bif8} as:
\begin{equation}\label{sf}
\mathbf{x}^{(1)} = J \cdot \mathbf{x} + H(\mathbf{x}),
\end{equation}
where \( \mathbf{x} = (x, y)^T \), and \( H(\mathbf{x}) \) is the nonlinear part of system~\eqref{bif8} excluding the \( O(\cdot) \) terms, given by:
\[
H(\mathbf{x}) =
\begin{bmatrix}
a_{20}x^2 + a_{11}xy + a_{30}x^3 + a_{21}x^2y \\
b_{20}x^2 + b_{11}xy + b_{30}x^3 + b_{21}x^2y
\end{bmatrix}.
\]

Define the transformation matrix:
\[
T = \begin{bmatrix}
\frac{\alpha}{2(2\overline{u} + \gamma - 1)} & -\frac{\overline{u}}{2(\gamma + \overline{u})} \\
0 & 1
\end{bmatrix}, \quad
T^{-1} = \begin{bmatrix}
\frac{2\gamma_0(1-a_{10})}{r\alpha} & \frac{1-a_{10}}{\alpha} \\
0 & 1
\end{bmatrix}.
\]

Let \( m = \frac{\alpha(\gamma + \overline{u})}{\overline{u}(2\overline{u} + \gamma - 1)} \) and \( n = \frac{\overline{u}}{2(\gamma + \overline{u})} \), then:
\[
T = \begin{bmatrix} mn & -n \\ 0 & 1 \end{bmatrix}, \quad
T^{-1} = \begin{bmatrix} \frac{1}{mn} & \frac{1}{m} \\ 0 & 1 \end{bmatrix}.
\]

Applying the transformation \( \begin{bmatrix} x \\ y \end{bmatrix} = T \cdot \begin{bmatrix} X \\ Y \end{bmatrix} \), system~\eqref{bif8} becomes:
\begin{equation}\label{sf1}
\mathbf{X}^{(1)} = T^{-1} J T \mathbf{X} + T^{-1} H(T\mathbf{X}) + O(\rho_1),
\end{equation}
where \( \mathbf{X} = (X, Y)^T \), and \( \rho_1 = \sqrt{X^2 + Y^2} \).

Let:
\[
H(T\mathbf{X}) = \begin{bmatrix} f(X, Y) \\ g(X, Y) \end{bmatrix},
\]
where
\begin{align*}
f(X, Y) &= a_{20}m^2n^2X^2 + (a_{11}mn - a_{20}mn^2)XY + (a_{20}n^2 - a_{11}n)Y^2 \\
&\quad + a_{30}m^3n^3X^3 + (a_{21}m^2n^2 - 3a_{30}m^2n^3)X^2Y \\
&\quad + (3a_{30}mn^3 - 2a_{21}mn^2)XY^2 + (a_{21}n^2 - a_{30}n^3)Y^3,
\end{align*}
\begin{align*}
g(X, Y) &= b_{20}m^2n^2X^2 + (b_{11}mn - b_{20}mn^2)XY + (b_{20}n^2 - b_{11}n)Y^2 \\
&\quad + b_{30}m^3n^3X^3 + (b_{21}m^2n^2 - 3b_{30}m^2n^3)X^2Y \\
&\quad + (3b_{30}mn^3 - 2b_{21}mn^2)XY^2 + (b_{21}n^2 - b_{30}n^3)Y^3.
\end{align*}

Now define:
\[
T^{-1} H(T\mathbf{X}) = \begin{bmatrix} F(X, Y) \\ G(X, Y) \end{bmatrix},
\]
where
\begin{align*}
F(X, Y) &= c_{20}X^2 + c_{11}XY + c_{02}Y^2 + c_{30}X^3 + c_{21}X^2Y + c_{12}XY^2 + c_{03}Y^3, \\
G(X, Y) &= d_{20}X^2 + d_{11}XY + d_{02}Y^2 + d_{30}X^3 + d_{21}X^2Y + d_{12}XY^2 + d_{03}Y^3,
\end{align*}
with coefficients:
\begin{align*}
&c_{20} = a_{20}mn + b_{20}mn^2, \quad
c_{11} = a_{11} - a_{20}n + b_{11}n - b_{20}n^2, \\
&c_{02} = \frac{a_{20}n - a_{11} + b_{20}n^2 - b_{11}n}{m}, \quad
c_{30} = a_{30}m^2n^2 + b_{30}m^2n^3, \\
&c_{21} = a_{21}mn - 3a_{30}mn^2 + b_{21}mn^2 - 3b_{30}mn^3, \\
&c_{12} = 3a_{30}n^2 - 2a_{21}n + 3b_{30}n^3 - 2b_{21}n^2, \\
&c_{03} = \frac{a_{21}n - a_{30}n^2 + b_{21}n^2 - b_{30}n^3}{m}, \\
&d_{20} = b_{20}m^2n^2, \quad
d_{11} = b_{11}mn - b_{20}mn^2, \\
&d_{02} = b_{20}n^2 - b_{11}n, \quad
d_{30} = b_{30}m^3n^3, \\
&d_{21} = b_{21}m^2n^2 - 3b_{30}m^2n^3, \\
&d_{12} = 3b_{30}mn^3 - 2b_{21}mn^2, \quad
d_{03} = b_{21}n^2 - b_{30}n^3.
\end{align*}

The relevant partial derivatives at the origin \( (0,0) \) are:
\begin{align*}
&F_{XX} = 2c_{20}, \quad F_{XY} = c_{11}, \quad F_{YY} = 2c_{02}, \\
&F_{XXX} = 6c_{30}, \quad F_{XXY} = 2c_{21}, \quad F_{XYY} = 2c_{12}, \quad F_{YYY} = 6c_{03}, \\
&G_{XX} = 2d_{20}, \quad G_{XY} = d_{11}, \quad G_{YY} = 2d_{02}, \\
&G_{XXX} = 6d_{30}, \quad G_{XXY} = 2d_{21}, \quad G_{XYY} = 2d_{12}, \quad G_{YYY} = 6d_{03}.
\end{align*}

\textbf{Step 5.} To analyze the Neimark–Sacker bifurcation, we compute the discriminating quantity \( \mathcal{L} \), which determines the stability of the bifurcating invariant closed curve. It is given by:
\begin{equation}\label{lya}
\mathcal{L} = -\text{Re}\left[\frac{(1 - 2\lambda_1)\lambda_2^2}{1 - \lambda_1} L_{11} L_{20} \right] - \frac{1}{2} |L_{11}|^2 - |L_{02}|^2 + \text{Re}(\lambda_2 L_{21}),
\end{equation}
where:
\begin{align*}
L_{20} &= \frac{1}{8} \left[(F_{XX} - F_{YY} + 2G_{XY}) + i(G_{XX} - G_{YY} - 2F_{XY}) \right], \\
L_{11} &= \frac{1}{4} \left[(F_{XX} + F_{YY}) + i(G_{XX} + G_{YY}) \right], \\
L_{02} &= \frac{1}{8} \left[(F_{XX} - F_{YY} - 2G_{XY}) + i(G_{XX} - G_{YY} + 2F_{XY}) \right], \\
L_{21} &= \frac{1}{16} \left[(F_{XXX} + F_{XYY} + G_{XXY} + G_{YYY}) \right. \\
&\quad \left. + i(G_{XXX} + G_{XYY} - F_{XXY} - F_{YYY}) \right].
\end{align*}

Based on the preceding analysis, we establish the following theorem characterizing the bifurcation behavior of the system.

\begin{thm}\label{bifurcation}
Let \( \beta > r(1+\gamma) \), and suppose \( \theta = \theta_0 \) is a solution of the equation \( q(\overline{u}) = 1 \). If the parameter \( \theta \) varies in a sufficiently small neighborhood of \( \theta_0 \), then the system~\eqref{h12} undergoes a Neimark--Sacker bifurcation at the fixed point \( \overline{E} = (\overline{u}, \overline{v}) \). Furthermore, if \( \mathcal{L} < 0 \) (respectively, \( \mathcal{L} > 0 \)), an attracting (respectively, repelling) invariant closed curve bifurcates from the fixed point for \( \theta < \theta_0 \) (respectively, \( \theta > \theta_0 \)).
\end{thm}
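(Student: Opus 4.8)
The plan is to verify, one by one, the three standard hypotheses of the discrete Hopf (Neimark--Sacker) bifurcation theorem as stated, e.g., in Kuznetsov or Wan, for the one-parameter family \eqref{bif2} with bifurcation parameter $\theta^*=\theta-\theta_0$, and then to read off the stability of the bifurcating closed curve from the sign of the first Lyapunov-type coefficient $\mathcal{L}$ defined in \eqref{lya}. Steps 1--5 preceding the statement already do essentially all of this work, so the proof is mostly a matter of assembling those pieces in the correct logical order and citing the appropriate normal-form theorem.

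First I would record the eigenvalue condition: by Theorem~\ref{type} and Lemma~\ref{lem1}(i.5), at $\theta=\theta_0$ the Jacobian $J(\overline u)$ has eigenvalues $\lambda_{1,2}$ on the unit circle because $q(\overline u)=1$ and, since $\beta>r(1+\gamma)$ forces $1<p(\overline u)<2$, the trace lies strictly between $-2$ and $2$. Next I would invoke the transversality computation \eqref{bif7}: differentiating $|\lambda_{1,2}(\theta^*)|=\sqrt{b(\theta^*)}$ at $\theta^*=0$ gives a strictly negative derivative, so the complex pair crosses the unit circle transversally (and moves inward as $\theta^*$ increases, which is what fixes the side $\theta<\theta_0$ for the bifurcating curve). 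Then I would check the non-resonance conditions $\lambda_{1,2}^m\neq 1$ for $m=1,2,3,4$: since $\lambda_{1,2}=\tfrac12(1+a_{10}\pm i\alpha)$ with $1<1+a_{10}<2$, the argument of $\lambda_{1,2}$ lies strictly between $0$ and $\pi/3$, hence cannot be a multiple of $2\pi/m$ for $m\le 4$ (in particular $\lambda_{1,2}\neq\pm1,\pm i$ and $\lambda_{1,2}$ is not a primitive cube root of unity). This disposes of the strong resonances.

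With the three hypotheses in hand, I would apply the normal-form reduction carried out in Steps 3--4: the shift to the origin \eqref{bif1}, the third-order Taylor expansion \eqref{bif8} with the explicit coefficients $a_{ij},b_{ij}$, and the linear change of coordinates by $T$ bringing $J(\overline u)\big|_{\theta_0}$ to the real canonical rotation-scaling form, producing the transformed nonlinearity $(F,G)$ with coefficients $c_{ij},d_{ij}$. From the partial derivatives $F_{XX},\dots,G_{YYY}$ one forms the complex quantities $L_{20},L_{11},L_{02},L_{21}$ and then the discriminating quantity $\mathcal{L}$ in \eqref{lya}. The standard theorem then states: if $\mathcal{L}\neq 0$, a unique closed invariant curve bifurcates from $\overline E$, attracting when $\mathcal{L}<0$ and repelling when $\mathcal{L}>0$; combined with the inward crossing direction from \eqref{bif7}, the attracting (resp.\ repelling) curve exists for $\theta<\theta_0$ (resp.\ $\theta>\theta_0$). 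I would conclude by stating that since all coefficients are explicit rational functions of $(\overline u,\gamma,\beta,\theta_0)$, the sign of $\mathcal{L}$ is in principle checkable for any given parameter set, and in the generic case $\mathcal{L}\neq 0$ the theorem follows.

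The main obstacle is not conceptual but bookkeeping: one must make sure the similarity transformation $T$ is the correct one — i.e., that $T^{-1}J T$ is exactly the rotation-by-$\arg\lambda_1$ matrix scaled by $|\lambda_1|=1$ — and that the formula \eqref{lya} is applied with $\lambda_1,\lambda_2$ in the right roles (the paper writes $\lambda_{1,2}=\tfrac12(1+a_{10}\mp i\alpha)$, so consistency with the sign conventions in $L_{20},L_{11},L_{02},L_{21}$ must be checked). There is also a minor inconsistency to reconcile in the displayed $T^{-1}$ (the entries written in terms of $\gamma_0$ and $r$ versus the clean form $\begin{bmatrix}\frac{1}{mn}&\frac1m\\0&1\end{bmatrix}$); I would simply adopt the clean $m,n$ form throughout, since that is what makes $T^{-1}JT$ canonical. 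Beyond these sign/normalization checks, the only remaining point is the genericity assumption $\mathcal{L}\neq 0$, which I would state as a hypothesis rather than attempt to prove in closed form, exactly as the theorem is phrased.
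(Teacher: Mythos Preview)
Your proposal is correct and follows essentially the same route as the paper: the paper's proof of Theorem~\ref{bifurcation} \emph{is} the five-step analysis preceding the statement (shift to the origin, perturbation in $\theta^*$, verification of transversality~\eqref{bif7} and non-resonance via $1<p(\overline u)<2$, Taylor expansion~\eqref{bif8}, similarity by $T$, and computation of $\mathcal L$ via~\eqref{lya}), and you have assembled these in the right logical order while correctly flagging the typographical inconsistency in the first displayed $T^{-1}$ and the need to adopt the clean $m,n$ form. One small caution: the inequality $p(\overline u)<2$ is equivalent to $\overline u>\tfrac{1-\gamma}{2}$ and is not a direct consequence of $\beta>r(1+\gamma)$ alone, so you should state it as a standing hypothesis (as the paper implicitly does) rather than derive it from the $\beta$-condition.
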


\subsection{Chaos Control}

In this subsection, we apply a feedback control method to system~\eqref{h12} in order to stabilize chaotic orbits around an unstable fixed point. To achieve this, we introduce a feedback control term
\[
\delta = -s_1(u - \overline{u}) - s_2(v - \overline{v}),
\]
where \( s_1 \) and \( s_2 \) are the feedback gain coefficients. Incorporating this control into system~\eqref{h12} yields the controlled system:

\begin{equation}\label{h12c}
\begin{cases}
u^{(1)} = u(2 - u) - \dfrac{uv}{\gamma + u} + \delta, \\[2mm]
v^{(1)} = \dfrac{\beta uv}{\gamma + u} + (1 - r)v - \dfrac{\theta u^2 v}{\gamma^2 + u^2}.
\end{cases}
\end{equation}

The Jacobian matrix of the controlled system~\eqref{h12c} at the fixed point \( (\overline{u}, \overline{v}) \) is given by
\begin{equation}\label{jacchaos}
J^c(\overline{u}, \overline{v}) =
\begin{bmatrix}
\dfrac{(1 - \overline{u})(\gamma + 2\overline{u})}{\gamma + \overline{u}} - s_1 & -\dfrac{\overline{u}}{\gamma + \overline{u}} - s_2 \\[2mm]
\gamma (1 - \overline{u})(\gamma + \overline{u}) \left( \dfrac{\beta}{(\gamma + \overline{u})^2} - \dfrac{2\theta \gamma \overline{u}}{(\gamma^2 + \overline{u}^2)^2} \right) & 1
\end{bmatrix}.
\end{equation}

The characteristic polynomial of the Jacobian~\eqref{jacchaos} is
\begin{equation}\label{chare}
\begin{aligned}
F(\lambda) =\ & \lambda^2
- \left(1 - s_1 + \dfrac{(1 - \overline{u})(\gamma + 2\overline{u})}{\gamma + \overline{u}}\right)\lambda \\
& + \left[ \dfrac{(1 - \overline{u})(\gamma + 2\overline{u})}{\gamma + \overline{u}} - s_1
+ \gamma \overline{u}(1 - \overline{u}) \left( \dfrac{\beta}{(\gamma + \overline{u})^2} - \dfrac{2\theta \gamma \overline{u}}{(\gamma^2 + \overline{u}^2)^2} \right) \right. \\
& \left. +\ \gamma (1 - \overline{u})(\gamma + \overline{u}) \left( \dfrac{\beta}{(\gamma + \overline{u})^2} - \dfrac{2\theta \gamma \overline{u}}{(\gamma^2 + \overline{u}^2)^2} \right) s_2 \right].
\end{aligned}
\end{equation}

Let \( \lambda_1 \) and \( \lambda_2 \) be the roots of the characteristic polynomial~\eqref{chare}. Then we have:
\begin{equation}\label{sum}
\lambda_1 + \lambda_2 = -s_1 + 1 + \dfrac{(1 - \overline{u})(\gamma + 2\overline{u})}{\gamma + \overline{u}},
\end{equation}
and
\begin{equation}\label{dot}
\begin{aligned}
\lambda_1 \lambda_2 =\ & -s_1
+ \gamma (1 - \overline{u})(\gamma + \overline{u}) \left( \dfrac{\beta}{(\gamma + \overline{u})^2} - \dfrac{2\theta \gamma \overline{u}}{(\gamma^2 + \overline{u}^2)^2} \right) s_2 \\
& + \dfrac{(1 - \overline{u})(\gamma + 2\overline{u})}{\gamma + \overline{u}}
+ \gamma \overline{u}(1 - \overline{u}) \left( \dfrac{\beta}{(\gamma + \overline{u})^2} - \dfrac{2\theta \gamma \overline{u}}{(\gamma^2 + \overline{u}^2)^2} \right).
\end{aligned}
\end{equation}

\begin{pro}\label{contr}
If the roots \( \lambda_1, \lambda_2 \) of the characteristic polynomial~\eqref{chare} satisfy \( |\lambda_{1,2}| < 1 \), then the fixed point \( (\overline{u}, \overline{v}) \) of the controlled system~\eqref{h12c} is asymptotically stable.
\end{pro}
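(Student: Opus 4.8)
The plan is to reduce the statement to the classical linearization principle for discrete dynamical systems. The controlled system~\eqref{h12c} defines a map
\[
G(u,v)=\left(\,u(2-u)-\frac{uv}{\gamma+u}-s_1(u-\overline u)-s_2(v-\overline v)\,,\ \ \frac{\beta uv}{\gamma+u}+(1-r)v-\frac{\theta u^2 v}{\gamma^2+u^2}\right),
\]
which is real-analytic, and in particular $C^1$, on a neighbourhood of $(\overline u,\overline v)$: the only singularity of $G$ is at $u=-\gamma$, while the positive fixed point has $\overline u\in(0,1)$. By construction $(\overline u,\overline v)$ is a fixed point of $G$, and the derivative $DG(\overline u,\overline v)$ equals the matrix $J^c(\overline u,\overline v)$ in~\eqref{jacchaos}, whose characteristic polynomial is~\eqref{chare}.

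First I would observe that the hypothesis $|\lambda_{1,2}|<1$ says exactly that the spectral radius of $DG(\overline u,\overline v)$ is strictly less than one. Then I would invoke the standard discrete-time stability theorem: if $x\mapsto G(x)$ is $C^1$ near a fixed point $\overline x$ and every eigenvalue of $DG(\overline x)$ has modulus strictly below one, then $\overline x$ is locally asymptotically stable (a sink). The underlying argument is a short contraction estimate: since the spectral radius is $<1$, one can choose an adapted matrix norm $\|\cdot\|$ with $\|DG(\overline x)\|\le r_0<1$; by the mean value inequality there is $r_1\in(r_0,1)$ and a ball $B_\varepsilon(\overline x)$ on which $\|G(x)-\overline x\|\le r_1\|x-\overline x\|$, so $B_\varepsilon(\overline x)$ is forward-invariant and every orbit starting in it converges geometrically to $\overline x$. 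For the record, the eigenvalue hypothesis can be made explicit via Lemma~\ref{lem1}(i.1): writing~\eqref{chare} as $F(\lambda)=\lambda^2-(\lambda_1+\lambda_2)\lambda+\lambda_1\lambda_2$ with $\lambda_1+\lambda_2$ and $\lambda_1\lambda_2$ given by~\eqref{sum}--\eqref{dot}, the condition $|\lambda_{1,2}|<1$ is equivalent to the three inequalities $F(1)>0$, $F(-1)>0$, and $\lambda_1\lambda_2<1$, which translate into explicit admissible ranges for the gains $s_1,s_2$; this is routine algebra and may be stated without full detail.

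I do not expect any real obstacle here, since the content of the proposition is precisely the invocation of the linearization theorem. The only points requiring a little care are the mild regularity check that the feedback term $\delta=-s_1(u-\overline u)-s_2(v-\overline v)$ introduces no new singularity near $\overline u\in(0,1)$, and phrasing the hypothesis in the exact form needed, namely that the spectral radius of the Jacobian is strictly less than one rather than merely that the fixed point be hyperbolic.
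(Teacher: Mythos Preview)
Your argument is correct: the proposition, as literally stated, is exactly the discrete-time linearization (Hartman--Grobman/stability) theorem applied to the $C^1$ map $G$ at $(\overline u,\overline v)$, and your contraction estimate sketch is sound. You also note, correctly, that via Lemma~\ref{lem1}(i.1) the eigenvalue condition translates into the three inequalities $F(1)>0$, $F(-1)>0$, $\lambda_1\lambda_2<1$ on the gains $(s_1,s_2)$.

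The paper, however, takes a different route and really proves something slightly different from the stated proposition. Rather than arguing the implication ``$|\lambda_{1,2}|<1\Rightarrow$ asymptotic stability'' (which it treats as known), the paper's proof computes the \emph{boundary} of the stability region in the $(s_1,s_2)$-plane: setting $\lambda_1\lambda_2=1$, $\lambda_1=1$, and $\lambda_1=-1$ in \eqref{sum}--\eqref{dot} yields three explicit lines $l_1,l_2,l_3$ whose enclosed triangle is the set of feedback gains for which both eigenvalues lie inside the unit disk. In other words, the paper carries out precisely the ``routine algebra'' you allude to at the end, and presents that computation as the content of the proof. What you gain is a clean justification of the actual claim; what the paper gains is an explicit, plottable description of the admissible control region (cf.\ Figure~\ref{treg}). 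If you want to match the paper's emphasis, you could replace your final parenthetical remark by writing out the three boundary equations in $(s_1,s_2)$ obtained from $F(1)=0$, $F(-1)=0$, and $\lambda_1\lambda_2=1$.
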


\begin{proof}
To determine the boundaries of the stability region, we consider the marginal cases where \( \lambda_1 = \pm 1 \) and \( \lambda_1 \lambda_2 = 1 \).

\smallskip
\noindent
\textbf{Case 1:} \( \lambda_1 \lambda_2 = 1 \). From equation~\eqref{dot}, we obtain the line
\begin{equation}
\begin{aligned}
l_1: \quad & s_1
- \gamma (1 - \overline{u})(\gamma + \overline{u}) \left( \dfrac{\beta}{(\gamma + \overline{u})^2} - \dfrac{2\theta \gamma \overline{u}}{(\gamma^2 + \overline{u}^2)^2} \right) s_2
+ 1
- \dfrac{(1 - \overline{u})(\gamma + 2\overline{u})}{\gamma + \overline{u}} \\
& - \gamma \overline{u}(1 - \overline{u}) \left( \dfrac{\beta}{(\gamma + \overline{u})^2} - \dfrac{2\theta \gamma \overline{u}}{(\gamma^2 + \overline{u}^2)^2} \right) = 0.
\end{aligned}
\end{equation}

\smallskip
\noindent
\textbf{Case 2:} \( \lambda_1 = 1 \). Substituting into~\eqref{sum} and~\eqref{dot} yields:
\begin{equation}
l_2: \quad \overline{u} + (\gamma + \overline{u}) s_2 = 0.
\end{equation}

\smallskip
\noindent
\textbf{Case 3:} \( \lambda_1 = -1 \). Substituting into~\eqref{sum} and~\eqref{dot} gives:
\begin{equation}
\begin{aligned}
l_3: \quad & 2s_1
- \gamma (1 - \overline{u})(\gamma + \overline{u}) \left( \dfrac{\beta}{(\gamma + \overline{u})^2} - \dfrac{2\theta \gamma \overline{u}}{(\gamma^2 + \overline{u}^2)^2} \right) s_2
- 2
- \dfrac{2(1 - \overline{u})(\gamma + 2\overline{u})}{\gamma + \overline{u}} \\
& - \gamma \overline{u}(1 - \overline{u}) \left( \dfrac{\beta}{(\gamma + \overline{u})^2} - \dfrac{2\theta \gamma \overline{u}}{(\gamma^2 + \overline{u}^2)^2} \right) = 0.
\end{aligned}
\end{equation}

Therefore, the lines \( l_1, l_2, \) and \( l_3 \) determine the boundaries of the stability region where \( |\lambda_{1,2}| < 1 \). The triangular region enclosed by these lines corresponds to the values of feedback gains \( s_1 \) and \( s_2 \) for which the fixed point becomes asymptotically stable. This stability region is illustrated in Figure~\ref{treg} for specific parameter values of system~\eqref{h12}.
\end{proof}

\begin{figure}
  \centering
  \includegraphics[width=0.45\textwidth]{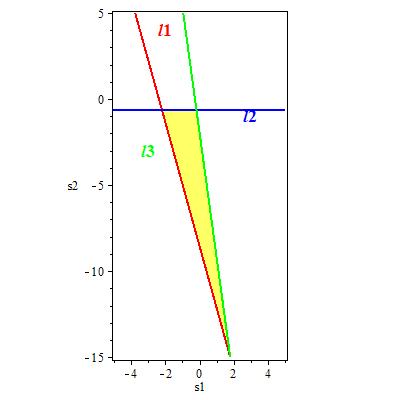}\\
  \caption{Triangular region of stability of the system  (\ref{h12c}) with parameters $r=1, \ \ \gamma=1, \ \ \beta=3$ and $\theta=1.2$}\label{treg}
\end{figure}

\section{Global dynamics of (\ref{h12})}

It is straightforward to verify that the set
\[
\mathcal{U} = \{(u,v) \in \mathbb{R}_+^2 \mid 0 \leq u \leq 2,\ v = 0\}
\]
is invariant with respect to the operator \eqref{h12}.

Moreover, if \( 0 < r \leq 1 \), then the set
\[
\mathcal{V} = \{(u,v) \in \mathbb{R}_+^2 \mid u = 0,\ v \geq 0\}
\]
is also invariant with respect to these operators.

It can further be shown by direct computation that any trajectory starting from a point in \( \mathcal{U} \) converges to the fixed point \( (1,0) \), while any trajectory originating in \( \mathcal{V} \) converges to the fixed point \( (0,0) \).

%
%
%
%

\begin{lemma}\label{invm}
Let \( \theta<\beta \leq r(1+\gamma) \) and \(0<r\leq1\). Then the following subsets of \( \mathbb{R}^2 \) are invariant under the operator \eqref{h12}:

\begin{itemize}
    \item[(i)] If \( \gamma \geq 2 \), then the set
    \[
    M_1 = \left\{ (u,v) \in \mathbb{R}^2 \,\middle|\, 0 \leq u \leq 1,\; 0 \leq v \leq f(u) \right\}
    \]
    is invariant.

    \item[(ii)] If \( 1 \leq \gamma < 2 \), then the set
    \[
    M_2 = \left\{ (u,v) \in \mathbb{R}^2 \,\middle|\, 0 \leq u \leq 2-\gamma,\; 0 \leq v \leq 2\gamma \right\} \bigcup
    \left\{ (u,v) \in \mathbb{R}^2 \,\middle|\, 2-\gamma < u \leq 1,\; 0 \leq v \leq f(u) \right\}
    \]
    is invariant,
    where \( f(u) = (2 - u)(\gamma + u) \).
\end{itemize}
\end{lemma}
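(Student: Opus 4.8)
The plan is to verify directly that the image of an arbitrary point $(u,v)\in M_i$ under the map~\eqref{h12} lies again in $M_i$. Write the map as $u^{(1)}=u\bigl(2-u-\tfrac{v}{\gamma+u}\bigr)$ and $v^{(1)}=v\,g(u)$, with $g(u)=\tfrac{\beta u}{\gamma+u}+1-r-\tfrac{\theta u^2}{\gamma^2+u^2}$. Four elementary facts do most of the work. First, $0\le g(u)\le 1$ for $0\le u\le 1$: the lower bound follows from $r\le 1$ together with $\tfrac{\theta u^2}{\gamma^2+u^2}\le\tfrac{\beta u^2}{\gamma^2+u^2}\le\tfrac{\beta u}{\gamma+u}$ (the first step from $\theta<\beta$, the second from $u\le 1\le\gamma$, i.e.\ $u(\gamma+u)\le\gamma^2+u^2$), and the upper bound from $\beta\le r(1+\gamma)$ and $\tfrac{(1+\gamma)u}{\gamma+u}\le 1$, giving $g(u)\le 1-\tfrac{\theta u^2}{\gamma^2+u^2}\le 1$; hence $0\le v^{(1)}\le v$. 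Second, $v\le f(u)$ holds on all of $M_i$ (on the rectangular block of $M_2$ because $f(w)=(2-w)(\gamma+w)$ is concave with $f(0)=f(2-\gamma)=2\gamma$, so $f\ge 2\gamma$ on $[0,2-\gamma]$). Third, therefore $u^{(1)}=u\bigl(2-u-\tfrac{v}{\gamma+u}\bigr)\ge 0$. Fourth, $u^{(1)}\le u(2-u)\le 1$. Thus $u^{(1)}\in[0,1]$ in all cases, and only the upper bound on $v^{(1)}$ remains.

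The core step is this bound, and I would reduce it to one polynomial inequality. Note $f'(w)=2-\gamma-2w$, so $f$ is nonincreasing on $[1-\gamma/2,\infty)$, an interval that contains $[0,1]$ when $\gamma\ge 2$ and contains $[2-\gamma,1]$ when $1\le\gamma<2$. Observe also that, for $u>0$, $u^{(1)}\le u$ is equivalent to $v\ge(1-u)(\gamma+u)$. Split into two cases. If $u^{(1)}\le u$ and both $u^{(1)},u$ lie in the range where $f$ decreases, then $v^{(1)}\le v\le f(u)\le f(u^{(1)})$. If $u^{(1)}>u$, then $v<(1-u)(\gamma+u)$, so $v^{(1)}\le v<(1-u)(\gamma+u)$; since $u^{(1)}\le u(2-u)\le 1$, it then suffices to prove
\[
(1-u)(\gamma+u)\ \le\ f\bigl(u(2-u)\bigr)=\bigl(2-u(2-u)\bigr)\bigl(\gamma+u(2-u)\bigr),
\]
provided $f$ is nonincreasing on $[u^{(1)},u(2-u)]$. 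Putting $t=1-u\in[0,1]$ turns this into $(\gamma+1)(1-t+t^2)\ge t^4$, which holds for all $\gamma>0$ since $1-t+t^2-t^4$ is decreasing on $[0,1]$ with value $0$ at $t=1$, hence nonnegative.

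It remains to assemble these pieces, keeping track of which block the preimage and image occupy. For $M_1$ ($\gamma\ge 2$): $f$ decreases on all of $[0,1]$, so both cases above immediately give $v^{(1)}\le f(u^{(1)})$, hence $(u^{(1)},v^{(1)})\in M_1$. For $M_2$ ($1\le\gamma<2$): if $u^{(1)}\le 2-\gamma$, then since $v^{(1)}\le v\le 2\gamma$ everywhere on $M_2$ (on the curved block because $f$ decreases to $f(2-\gamma)=2\gamma$), the image lies in the rectangular block. If $u^{(1)}>2-\gamma$, then $u^{(1)}>2-\gamma\ge 1-\gamma/2$ lies in the decreasing range of $f$; in the sub-case $u^{(1)}\le u$ one necessarily has $u\ge u^{(1)}>2-\gamma$, so $(u,v)$ is in the curved block, $v\le f(u)$ is available, and the first case applies; in the sub-case $u^{(1)}>u$ one has $u(2-u)\ge u^{(1)}>2-\gamma\ge 1-\gamma/2$, so $f$ is nonincreasing on $[u^{(1)},u(2-u)]$ and the second case applies. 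Either way $v^{(1)}\le f(u^{(1)})$, so $(u^{(1)},v^{(1)})$ is in the curved block of $M_2$.

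The main obstacle is the bookkeeping of the two-block structure of $M_2$: one must be careful to use the bound $2\gamma$ when the image falls in the rectangular block and the bound $f(u^{(1)})$ when it falls in the curved block, and to check that the relevant points $u$, $u^{(1)}$, $u(2-u)$ really lie in the interval $[1-\gamma/2,\infty)$ where $f$ decreases. Once the inequality $(\gamma+1)(1-t+t^2)\ge t^4$ on $[0,1]$ is isolated and the two elementary estimates $\tfrac{\beta u}{\gamma+u}\le r$ and $\tfrac{\theta u^2}{\gamma^2+u^2}\le\tfrac{\beta u}{\gamma+u}$ (both on $0\le u\le 1\le\gamma$) are in hand, every remaining step is a short computation.
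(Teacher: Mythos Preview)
Your proof is correct and follows essentially the same architecture as the paper's: establish $0\le u^{(1)}\le 1$ from $v\le f(u)$, show $0\le v^{(1)}\le v$ via the bounds $\tfrac{\beta u}{\gamma+u}\le r$ and $\tfrac{\theta u^2}{\gamma^2+u^2}\le\tfrac{\beta u}{\gamma+u}$, and then split the verification of $v^{(1)}\le f(u^{(1)})$ according to whether $u^{(1)}\le u$ or $u^{(1)}>u$.

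The one substantive local difference is your handling of the sub-case $u^{(1)}>u$. You pass through $f(u(2-u))$ and reduce to the polynomial inequality $(\gamma+1)(1-t+t^2)\ge t^4$, which you prove cleanly. The paper instead observes (somewhat tersely, calling it ``straightforward'') that $(1-u)(\gamma+u)\le(2-u^{(1)})(\gamma+u^{(1)})$ directly, since $u\le u^{(1)}\le 1$ gives $1-u\le 1\le 2-u^{(1)}$ and $\gamma+u\le\gamma+u^{(1)}$ factor by factor; this avoids the polynomial computation and the need to check that $u(2-u)$ lies in the decreasing range of $f$. On the other hand, your treatment of part~(ii) is considerably more careful than the paper's, which simply asserts that ``the inequality $v^{(1)}\le f(u^{(1)})$ is proved in the same manner as in Case~(i)''; your explicit block-by-block bookkeeping (rectangular vs.\ curved, using $v\le 2\gamma$ everywhere on $M_2$) fills a genuine gap in the exposition.
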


\begin{proof}
(i) Let \( (u,v) \in M_1 \). We will show that the image \( (u^{(1)}, v^{(1)}) \) under the operator \eqref{h12} also belongs to \( M_1 \), i.e.,
\[
0 \leq u^{(1)} \leq 1 \quad \text{and} \quad 0 \leq v^{(1)} \leq f(u^{(1)}),
\]
where \( f(u) = (2 - u)(\gamma + u) \).

Since \( v \leq f(u) = (2-u)(\gamma + u) \) for all \( u \in [0,1] \), we immediately have
\[
\frac{uv}{\gamma + u} \leq u(2 - u),
\]
which implies
\[
u^{(1)} = u(2 - u) - \frac{uv}{\gamma + u} \geq 0.
\]
Moreover, since the maximum of \( u(2 - u) \) on \( [0,1] \) is 1, we obtain
\[
u^{(1)} \leq u(2 - u) \leq 1,
\]
and hence \( 0 \leq u^{(1)} \leq 1 \).

Next, we show \( v^{(1)} \geq 0 \). Since \( v \geq 0 \) and using the definition
\[
v^{(1)} = v\left( \frac{\beta u}{\gamma + u} - \frac{\theta u^2}{\gamma^2 + u^2} + 1 - r \right),
\]
we estimate from below:
\[
v^{(1)} \geq uv \left( \frac{\beta}{\gamma + u} - \frac{\theta u}{\gamma^2 + u^2} \right).
\]
Now, using \( \beta \geq\theta \) and simplifying, we get:
\[
v^{(1)} \geq \theta uv \cdot \frac{\gamma(\gamma - u)}{(\gamma + u)(\gamma^2 + u^2)} \geq 0,
\]
because \( \gamma \geq u \) for all \( u \in [0,1] \).

To show \( v^{(1)} \leq v \), note that this is equivalent to:
\[
\frac{\beta u}{\gamma + u} - \frac{\theta u^2}{\gamma^2 + u^2} + 1 - r \leq 1,
\]
i.e.,
\[
\frac{\beta u}{\gamma + u} - \frac{\theta u^2}{\gamma^2 + u^2} \leq r.
\]
Rewriting, we get:
\[
\theta \geq \frac{(\beta u - ru - r\gamma)(\gamma^2 + u^2)}{u^2(\gamma + u)}.
\]
Since \( \beta \leq r(1 + \gamma) \), the numerator is negative, so the right-hand side is negative, and the inequality holds for all positive \( \theta \). Thus, \( v^{(1)} \leq v \).

Now we show the remaining condition:
\[
v^{(1)} \leq f(u^{(1)}) = (2 - u^{(1)})(\gamma + u^{(1)}).
\]

We consider two cases:

\textbf{Subcase 1:} If \( v \leq (1 - u)(\gamma + u) \), then \( u^{(1)} \geq u \), and since \( v^{(1)} \leq v \), \( u^{(1)} \leq 1 \) we have that \(v^{(1)} \leq v \leq f(u^{(1)})\) is straightforward.

\textbf{Subcase 2:} If \( v > (1 - u)(\gamma + u) \), then \( u^{(1)} < u \). Since \( f(u) \) is decreasing for \( \gamma \geq 2 \), we have:
\[
f(u^{(1)}) \geq f(u),
\]
and hence
\[
v^{(1)} \leq v \leq f(u) \leq f(u^{(1)}).
\]

In both cases, \( v^{(1)} \leq f(u^{(1)}) \) holds, completing the proof that \( (u^{(1)}, v^{(1)}) \in M_1 \).

\textbf{Case (ii):} \( 1 \leq \gamma < 2 \). In this case, the conditions \( u^{(1)} \leq 1 \) and \( 0 \leq v^{(1)} \leq v \) are also satisfied.

Consider the equation \( f(u) = 2\gamma \), where \( f(u) = (2 - u)(\gamma + u) \). The positive solution is
\[
u_0 = 2 - \gamma.
\]
We analyze the function \( f(u) \) on the interval \( [0, 1] \):
\begin{itemize}
    \item If \( 0 \leq u \leq 2 - \gamma \), then \( f(u) \geq f(0) = 2\gamma \).
    \item If \( 2 - \gamma < u \leq 1 \), then \( f(u) \leq 2\gamma \).
\end{itemize}
Therefore, in both subintervals, \( v \leq f(u) \), and hence \( u^{(1)} \geq 0 \).

The inequality \( v^{(1)} \leq f(u^{(1)}) \) is proved in the same manner as in Case (i).

This completes the proof.

\end{proof}


\begin{pro}\label{prop2}
Let \( \theta \leq \beta \leq r(1+\gamma) \), \( 0 < r \leq 1 \), and \( \gamma \geq 1 \). Then, for any initial point \( (u^0,v^0) \in M_i \), where \( i = 1,2 \) and \( u^0 > 0 \), the trajectory of the operator \eqref{h12} converges to the fixed point \( (1, 0) \).
\end{pro}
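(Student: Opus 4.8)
The plan is to run a Lyapunov‐type argument with $V(u,v)=v$ on the invariant region, and then to treat the $u$–equation as a vanishing perturbation of the scalar map $u\mapsto u(2-u)$, which has $1$ as a global attractor on $(0,1]$. First, by Lemma~\ref{invm} the set $M_i$ is forward invariant, so the orbit $(u_n,v_n)$ stays in $M_i$; in particular $0\le u_n\le 1$ and $0\le v_n\le f(u_n)=(2-u_n)(\gamma+u_n)$ for all $n$ (note that for $M_2$, since $\gamma\ge 1$ one has $2\gamma\le f(u)$ on $[0,2-\gamma]$, so the bound $v_n\le f(u_n)$ holds on all of $M_2$ as well). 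From the computation already performed in the proof of Lemma~\ref{invm}, the hypothesis $\beta\le r(1+\gamma)$ forces $\frac{\beta u}{\gamma+u}-\frac{\theta u^2}{\gamma^2+u^2}\le r$ on $[0,1]$, so
\[
v_{n+1}=v_n\,(1-c_n),\qquad c_n:=r-\frac{\beta u_n}{\gamma+u_n}+\frac{\theta u_n^2}{\gamma^2+u_n^2}\in[0,1],
\]
and hence $\{v_n\}$ is non‑increasing.

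Next I would upgrade this to a uniform contraction. Using $\beta\le r(1+\gamma)$ gives
\[
c_n\ \ge\ r-\frac{r(1+\gamma)u_n}{\gamma+u_n}+\frac{\theta u_n^2}{\gamma^2+u_n^2}=\frac{r\gamma(1-u_n)}{\gamma+u_n}+\frac{\theta u_n^2}{\gamma^2+u_n^2},
\]
and the right‑hand side, viewed as a continuous function of $u\in[0,1]$, is strictly positive: its two summands vanish only at $u=1$ and at $u=0$ respectively, never simultaneously. So its minimum $\delta>0$, whence $v_{n+1}\le(1-\delta)v_n$ and $v_n\le(1-\delta)^n v^0\to 0$.

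For the $u$–coordinate, write $u_{n+1}=u_n(2-u_n)-\varepsilon_n$ with $0\le\varepsilon_n=\frac{u_nv_n}{\gamma+u_n}\le v_n/\gamma\to 0$ (using $u_n\le 1\le\gamma$). Fix $N$ with $\varepsilon_n\le\tfrac14$ and $v_n/\gamma\le\tfrac14$ for $n\ge N$, and suppose $u_N>0$ (this is the delicate hypothesis, see below). If $0<u_n\le\tfrac12$ then $u_{n+1}=u_n\bigl(2-u_n-\tfrac{v_n}{\gamma+u_n}\bigr)\ge\tfrac54 u_n$, so after finitely many steps the orbit enters $[\tfrac12,1]$; and if $u_n\in[\tfrac12,1]$ then $u_{n+1}\in[\tfrac34-\varepsilon_n,1]\subseteq[\tfrac12,1]$, so it stays there. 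On this range set $w_n=1-u_n\in[0,\tfrac12]$; then $w_{n+1}=(1-u_n)^2+\varepsilon_n=w_n^2+\varepsilon_n\le\tfrac12 w_n+\varepsilon_n$, so $\limsup w_n\le\tfrac12\limsup w_n$, forcing $w_n\to 0$, i.e. $u_n\to 1$. Together with $v_n\to 0$ this yields $(u_n,v_n)\to(1,0)$.

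The main obstacle is exactly the clause ``$u_N>0$'': the line $\{u=0\}$ is invariant and, since $0<r\le 1$, every orbit on it converges to $(0,0)$, not to $(1,0)$, so one must guarantee the orbit never lands there. Since $u^{(1)}=\frac{u}{\gamma+u}\bigl(f(u)-v\bigr)$, one has $u^{(1)}=0$ precisely when $v=f(u)$; thus what is really needed is the strict inequality $v_n<f(u_n)$ along the whole orbit (so the proposition in fact requires $v^0<f(u^0)$, i.e. the initial point interior to $M_i$ relative to the graph $v=f(u)$ — it genuinely fails for, e.g., $(1,\gamma+1)\in M_1$). This is automatic once $v_n<\gamma+1$, which equals $\min_{[0,1]}f$ for $\gamma\ge1$ and holds for all large $n$ because $v_n\to0$; so only the finite initial segment where $v_n\ge\gamma+1$ must be handled, by showing the open region $\{0<u\le1,\ 0\le v<f(u)\}$ is forward invariant — equivalently that $v<f(u)\Rightarrow v^{(1)}<f(u^{(1)})$, a direct but somewhat laborious estimate exploiting $v^{(1)}\le(1-\delta)v$ together with the lower bound on $f$. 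I expect this propagation of the strict inequality, rather than the Lyapunov/perturbation part, to be where the real work lies.
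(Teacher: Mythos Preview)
Your Lyapunov choice $V(u,v)=v$ is exactly the paper's. Where the two arguments diverge is in how $\Delta V\le 0$ is exploited: the paper simply invokes LaSalle's invariance principle, observing that $\{\Delta V=0\}=\{(u,0)\}$ and asserting (after stipulating ``$u>0$'') that the only invariant point therein is $(1,0)$, with no further analysis of the $u$--dynamics. You instead upgrade non--increase to a uniform contraction $v_{n+1}\le(1-\delta)v_n$ and then treat the $u$--equation as a vanishing perturbation of the logistic map $u\mapsto u(2-u)$. Your route is longer but more transparent and delivers a geometric rate for $v_n\to 0$; the paper's is essentially a two--line appeal to LaSalle.

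Your boundary concern is well founded, and it is a gap the paper's own proof shares rather than avoids. The point $(0,0)$ lies in $\mathcal B=\{(u,0)\}$ and is invariant, so LaSalle alone does not single out $(1,0)$; the paper dismisses $(0,0)$ by fiat (``Assume $u>0$'') without ever arguing that an orbit with $u^0>0$ cannot reach $\{u=0\}$. Your example $(u^0,v^0)=(1,\gamma+1)\in M_1$ is decisive: one step sends it to $u^{(1)}=0$, after which the orbit converges to $(0,0)$, so the proposition as stated is literally false on the graph $v=f(u)$. For initial data with $v^0<f(u^0)$ your perturbation argument goes through once one checks that the strict inequality $v_n<f(u_n)$ propagates; your outlined estimate --- combining $v^{(1)}\le(1-\delta)v$ with the lower bound $\min_{[0,1]}f=\gamma+1$ --- is the right way to close it, and the paper supplies nothing in its place.
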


\begin{proof}
Define the Lyapunov function \( V: M_i \to \mathbb{R} \) by
\[
V(u,v) = v.
\]
Clearly, \( V(u,v) \geq 0 \) for all \( (u,v) \in M_i \), and \( V(1,0) = 0 \). Consider the difference along the trajectory:
\[
\Delta V = V(u^{(1)},v^{(1)}) - V(u,v) = v^{(1)} - v \leq 0,
\]
which follows from the proof of Lemma~\ref{invm}. That is, \( V \) is non-increasing along trajectories.

The set where \( \Delta V = 0 \) is
\[
\mathcal{B} = \{ (u,v) \in M_i : \Delta V = 0 \} = \{ (u, 0) \}.
\]
Assume \( u > 0 \) (the case \( u = 0 \) leads trivially to \( v^{(k)} \to 0 \) as \( k \to \infty \)). Then, the only invariant point in \( \mathcal{B} \) is the fixed point \( (1, 0) \).

Since the sets \( M_i \) are compact and positively invariant, and \( V(u,v) \) is continuous and non-increasing along trajectories, LaSalle’s Invariance Principle implies that all trajectories with \( u^0 > 0 \) converge to the largest invariant subset of \( \mathcal{B} \), which is the singleton \( \{(1, 0)\} \).

Hence, \( (u^k,v^k) \to (1,0) \) as \( k \to \infty \).
\end{proof}


\section{numerical simulations}

\textbf{Example 1 }(\emph{Case of a Unique Positive Fixed Point}).
Consider system~\eqref{h12} with parameters \( r = 0.5 \), \( \beta = 2 \), and \( \gamma = 1 \). By solving the system of equations \( \theta = \Psi(u) \) and \( q(u) = 1 \), we determine the bifurcation value of \( \theta \) as
\[
\theta_0 \approx 0.347233.
\]
This corresponds to a unique positive fixed point (case (ii.5) of Theorem~\ref{thm2}), given approximately by
\[
\overline{E} \approx (0.371926,\, 0.861671).
\]

The associated multipliers are
\[
\lambda_1 \approx 0.8991 - 0.4376i, \quad \lambda_2 \approx 0.8991 + 0.4376i,
\]
indicating complex conjugate eigenvalues.

The computed normal form coefficients are:
\[
L_{20} \approx 0.011155 + 0.040816i, \quad L_{11} \approx -0.192358 - 0.204738i,
\]
\[
L_{02} \approx -0.274762 - 0.113795i, \quad L_{21} \approx -0.087924 + 0.048583i.
\]

The discriminating quantity is
\[
\mathcal{L} \approx -0.248898 < 0.
\]
Hence, by Theorem~\ref{bifurcation}, the system~\eqref{h12} undergoes a \textbf{Neimark--Sacker bifurcation}, and an \textbf{attracting} invariant closed curve bifurcates from the fixed point when \(\theta<\theta_0\) ( i.e. \( \theta^* < 0 \)).

Figure~\ref{diag} presents the Neimark--Sacker bifurcation diagram for system~\eqref{h12}. In Figure~\ref{fig1}(a), the fixed point remains attracting since \( q(\overline{u}) < 1 \). In Figures~\ref{fig1}(b)--(d), an invariant closed curve emerges as \( q(\overline{u}) > 1 \), and it is clearly attracting.

\begin{figure}[h!]
    \centering
    \includegraphics[width=0.9\textwidth]{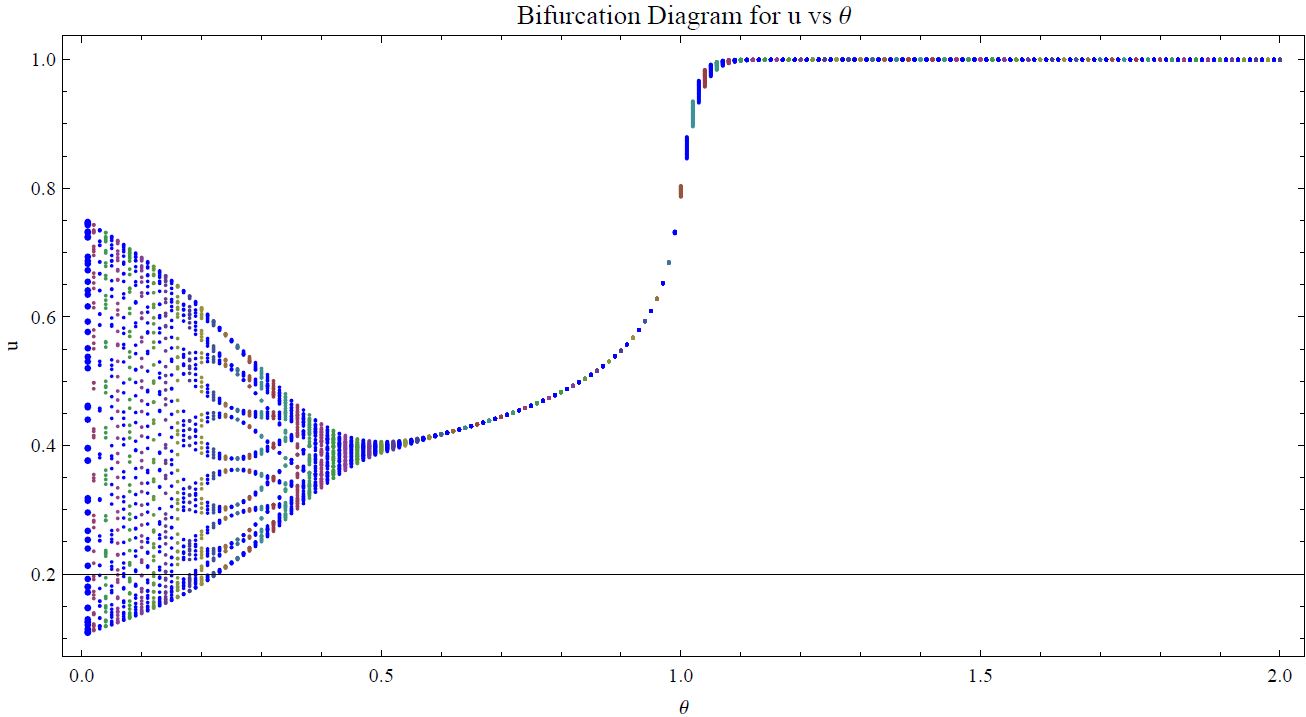}
        \caption{Bifurcation diagram for the system \ref{h12} with the parameters $r = 0.5, \beta =
2,$ and $\gamma=1$ when the bifurcation parameter $\theta$ varying on the interval $0.01\leq\theta\leq2$.}
    \label{diag}
\end{figure}

\begin{figure}[h!]
    \centering
    \subfigure[\tiny$\theta=0.36, \overline{u}\approx0.3737, q(\overline{u})\approx0.9962, (0.32, 0.82).$]{\includegraphics[width=0.45\textwidth]{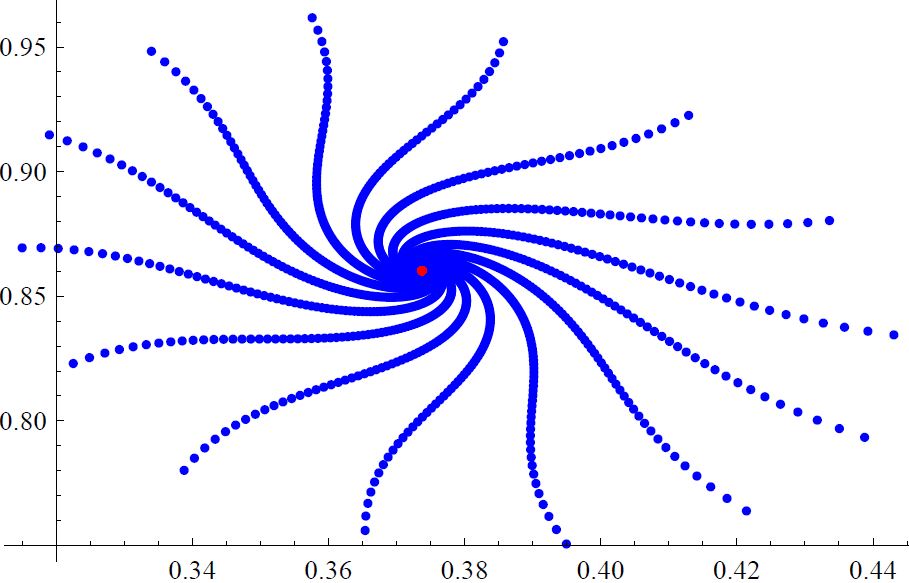}}
    \subfigure[\tiny$\theta=0.3472, \overline{u}\approx0.3719, q(\overline{u})\approx1, (0.32, 0.82).$]{\includegraphics[width=0.45\textwidth]{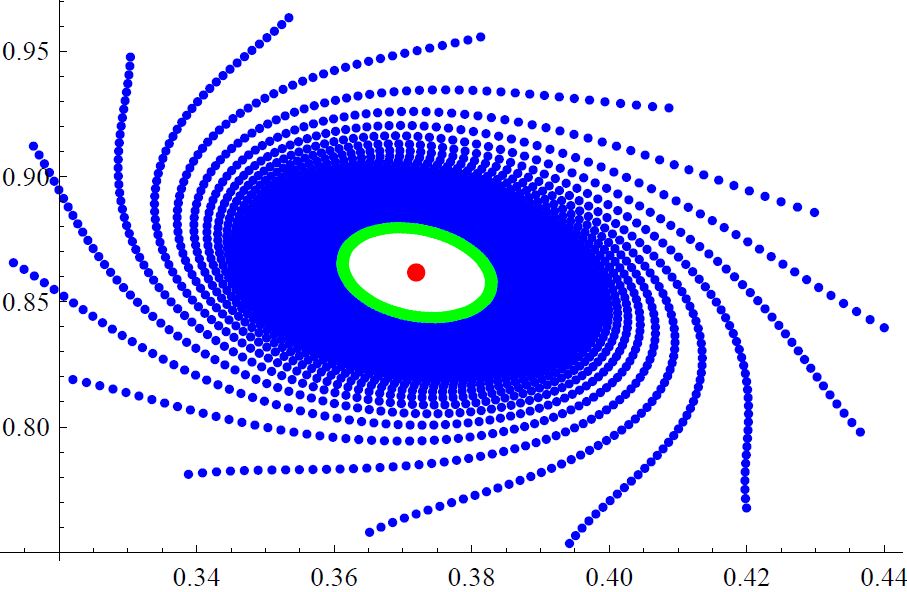}} \hspace{0.3in}
    \subfigure[\tiny$\theta=0.32, \overline{u}\approx0.3681, q(\overline{u})\approx1.0078, (0.45, 1).$]{\includegraphics[width=0.45\textwidth]{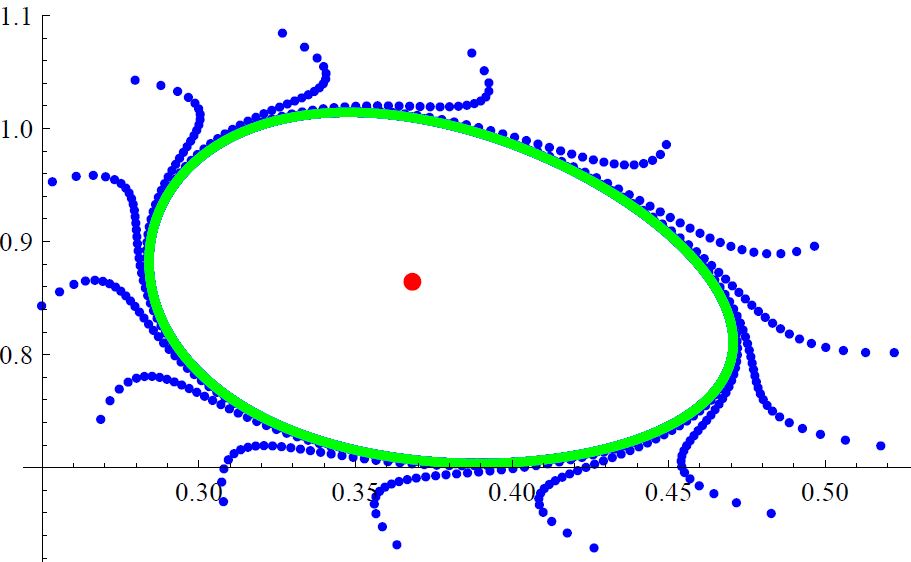}} \hspace{0.3in}
    \subfigure[\tiny$\theta=0.28, \overline{u}\approx0.3629, q(\overline{u})\approx1.0189, (0.34, 0.85).$]{\includegraphics[width=0.45\textwidth]{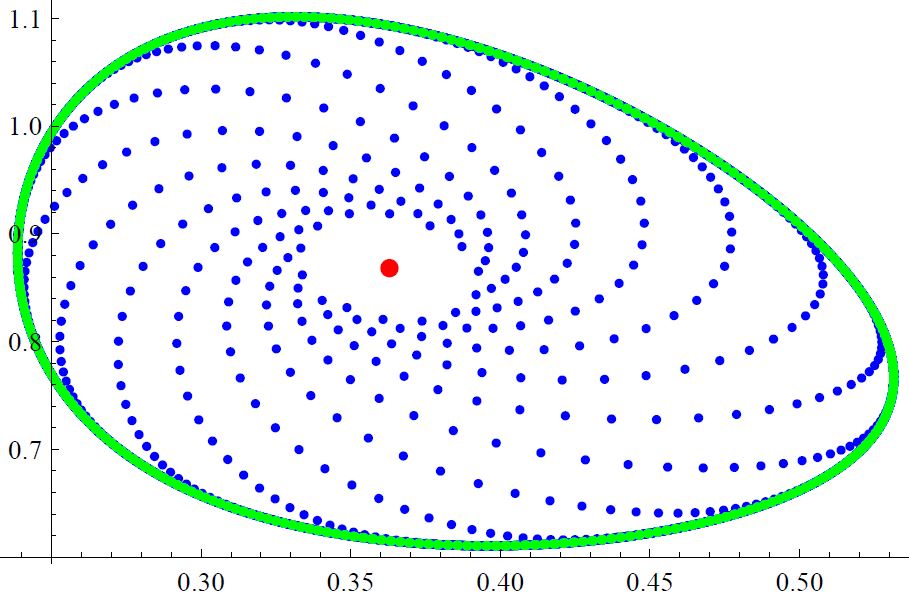}}
    \caption{Phase portraits for system (\ref{h12}) with parameters \( r = 0.5 \), \( \beta = 2 \), \( \gamma = 1 \) and \( n = 10,000 \). The red point represents the fixed point \( \overline{E} \), while the green curve denotes an attracting invariant closed curve.}
    \label{fig1}
\end{figure}

\textbf{Example 2 } (\emph{Case of two Positive Fixed Points}).
Consider system~\eqref{h12} with parameters \( r = 0.5 \), \( \beta = 4 \), and \( \gamma = 1 \). By solving the system of equations \( \theta = \Psi(u) \) and \( q(u) = 1 \), we determine the bifurcation value of \( \theta \) as
\[
\theta_0 =5.
\]
In this case there exists two positive fixed points $E_1\approx(0.2360,0.9442)$ and $E_2\approx(0.3333,0.8889).$  (case (ii.3) of Theorem~\ref{thm2}). The Jacobian at the fixed point $E_2$ has eigenvalues $\mu_1\approx1.2436,$ $\mu_2\approx0.5897,$ so fixed point $E_2$ is saddle.

At the first fixed point $E_1\approx(0.2360,0.9442),$ the equation \( q(u) = 1 \) is satisfied. We compute a discriminating quantity  $\mathcal{L}$ at this point.

The associated multipliers are
\[
\lambda_1 \approx 0.9549 - 0.2968i, \quad \lambda_2 \approx 0.9549 + 0.2968i,
\]
indicating complex conjugate eigenvalues.

The computed normal form coefficients are:
\[
L_{20} \approx -0.019339 - 0.394724i, \quad L_{11} \approx -0.284541 - 1.141906i,
\]
\[
L_{02} \approx -0.342470 - 0.649029i, \quad L_{21} \approx -0.360033 + 0.565082i.
\]

The discriminating quantity is
\[
\mathcal{L} \approx -1.544896 < 0.
\]
Hence, by Theorem~\ref{bifurcation}, the system~\eqref{h12} undergoes a \textbf{Neimark--Sacker bifurcation}, and an \textbf{attracting} invariant closed curve bifurcates from the fixed point when \(\theta<\theta_0\) ( i.e. \( \theta^* < 0 \)).

For the given parameter values \( r, \beta, \gamma \), the equation \( h(x) = 0 \) has a unique solution \( \widehat{u}_1 \approx 0.2757 \) in the interval \( (0,1) \), and the corresponding values of \( \Psi \) are \(\Psi(1)=3\) and \( \Psi(\widehat{u}_1) \approx 5.1594 \). According to case~(ii.3) of Theorem~\ref{thm2}, two distinct positive fixed points exist if \( \theta \in (3, 5.1594) \).

The bifurcation behavior of the first coordinate of the positive fixed points is illustrated in Figure~\ref{twofp}. In Figure~\ref{fig2}(a), the fixed point remains attracting since \( q(u_1) < 1 \). In Figure~\ref{fig2}(b), an invariant closed curve emerges as \( q(u_1) = 1 \), and it is clearly attracting. In Figure~\ref{fig2}(c), with \( \theta = 4.9 \), the trajectory starting from \( (0.31, 0.99) \) is attracted to the invariant closed curve since \( q(u_1) > 1 \), whereas the trajectory starting from \( (0.33, 1.1) \) converges to the boundary fixed point \( (1, 0) \). In Figure~\ref{fig2}(d), for \( \theta = 4.5 \), we again have \( q(u_1) > 1 \), but both trajectories starting from \( (0.2, 0.95) \) and \( (0.45, 0.87) \) converge to the fixed point \( (1, 0) \), indicating that the invariant closed curve does not persist in this case.

\begin{figure}[h!]
    \centering
  \includegraphics[width=0.8\textwidth]{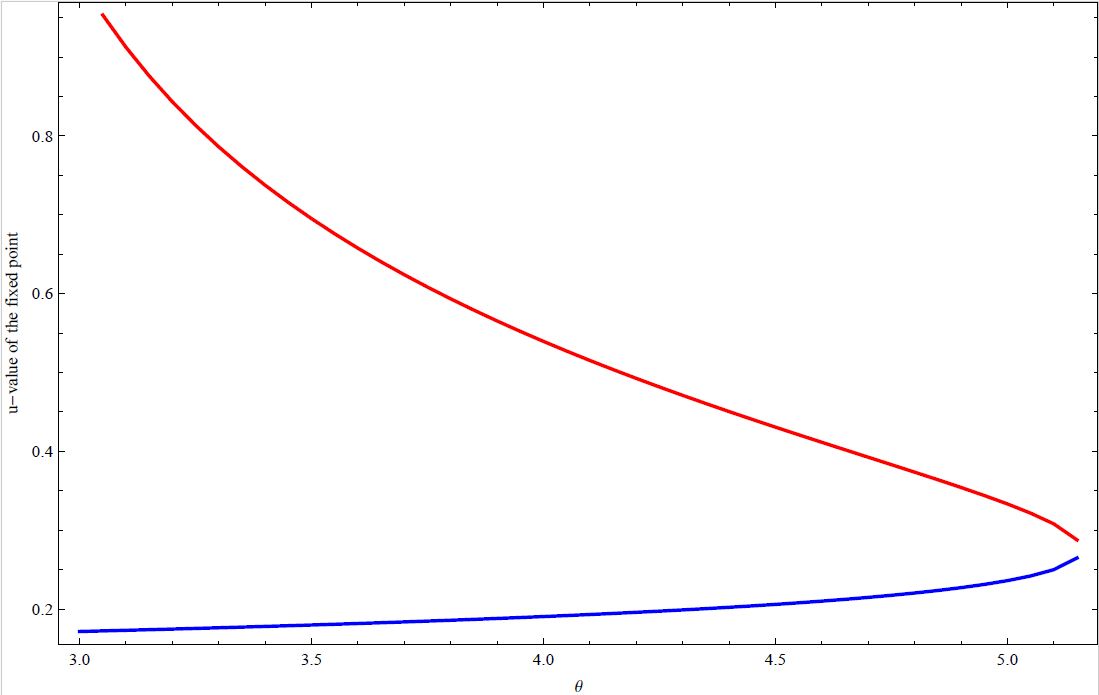}
    \caption{Bifurcation behaviour of the model (\ref{h12}) with parameters \( r = 0.5 \), \( \beta = 4 \), and \( \gamma = 1 \). The horizontal axis shows the bifurcation parameter \( \theta \), while the vertical axis represents the first coordinate \( u \) of the positive fixed points. The blue curve corresponds to the fixed point \( (u_1, v_1) \), and the red curve to \( (u_2, v_2) \). For \( \theta \in(3,5.1594) \), two distinct positive fixed points exist. The lower branch (blue) undergoes a Neimark--Sacker bifurcation.}
    \label{twofp}
\end{figure}

\begin{figure}[h!]
    \centering
    \subfigure[\tiny$\theta=5.02, u_1\approx0.2382, q(u_1)\approx0.9931, (0.3, 0.9).$]{\includegraphics[width=0.45\textwidth]{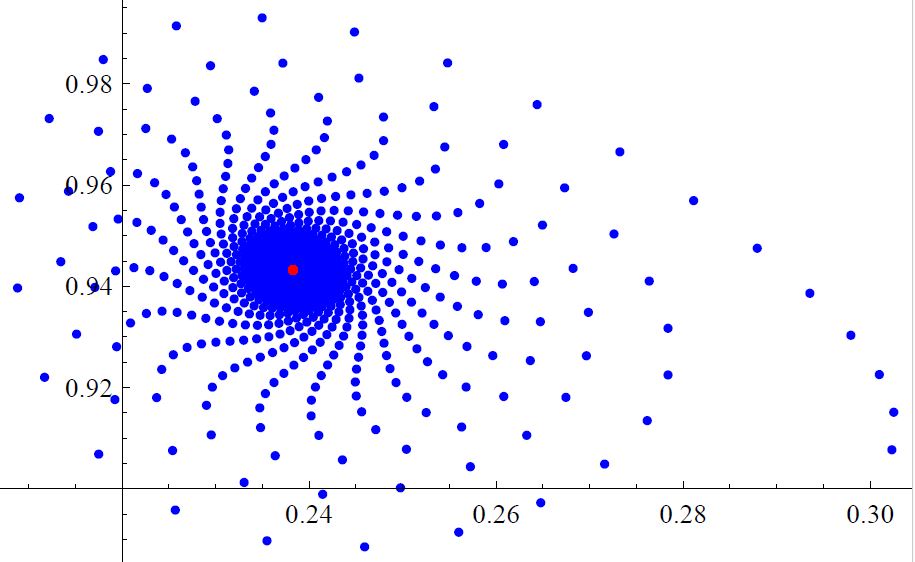}}
    \subfigure[\tiny$\theta=5, u_1\approx0.2360, q(u_1)=1, (0.3, 0.9).$]{\includegraphics[width=0.45\textwidth]{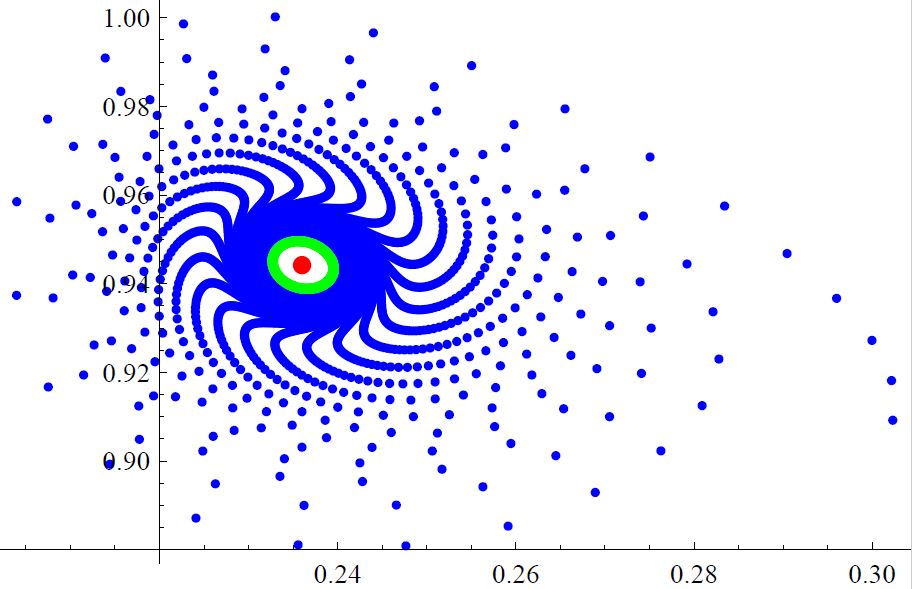}} \hspace{0.3in}
    \subfigure[\tiny$\theta=4.9, E_1\approx(0.227,0.948),  E_2\approx(0.354,0.874).$]{\includegraphics[width=0.45\textwidth]{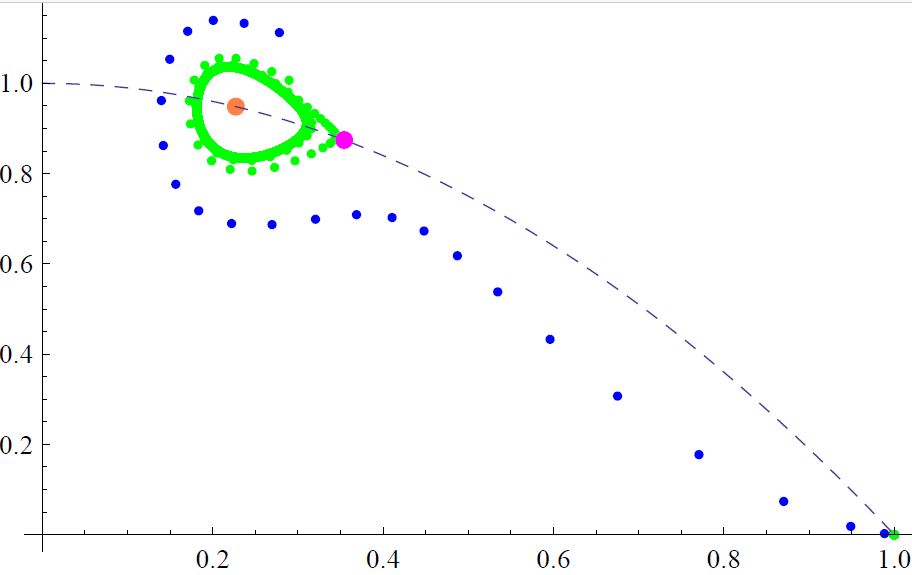}} \hspace{0.3in}
    \subfigure[\tiny$\theta=4.5, E_1\approx(0.205,0.957),  E_2\approx(0.430,0.814).$]{\includegraphics[width=0.45\textwidth]{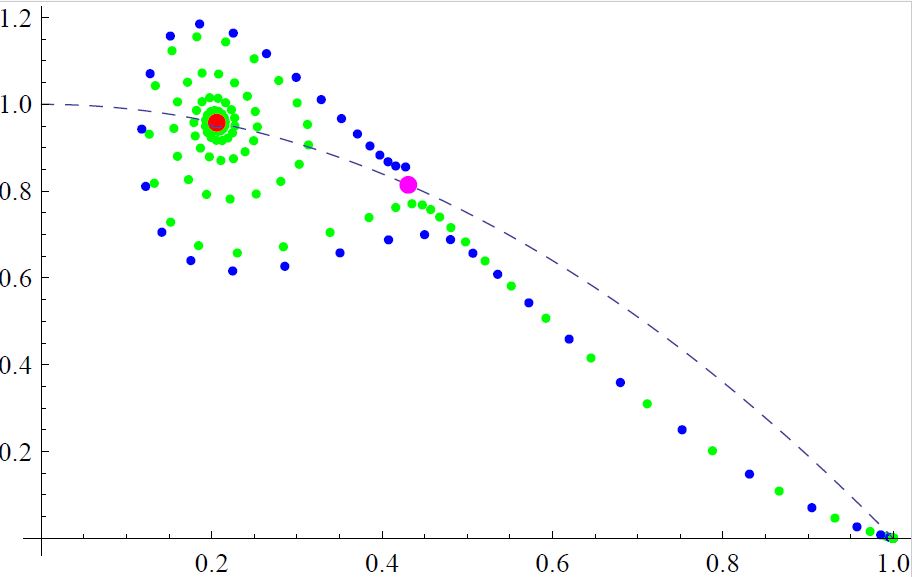}}
    \caption{
Phase portraits of system~\eqref{h12} with parameters \( r = 0.5 \), \( \beta = 4 \), \( \gamma = 1 \), and \( n = 10{,}000 \). The red point marks the fixed point \( E_1 \), while the green curve is an attracting invariant closed curve. In panel~(c), green and blue trajectories start from \( (0.31, 0.99) \) and \( (0.33, 1.1) \), respectively; the second fixed point \( E_2 \) is shown in pink. In panel~(d), trajectories from \( (0.2, 0.95) \) and \( (0.45, 0.87) \) are shown in green and blue. The dashed curve is part of the parabola \( (1 - x)(\gamma + x) \), on which all positive fixed points lie.
}
    \label{fig2}
\end{figure}

\textbf{Example 3 } (\emph{Case of three Positive Fixed Points}).
Consider system~\eqref{h12} with parameters \( r = 0.5 \), \( \beta = 3 \), and \( \gamma = 0.1 \). In this case, the function \( h(x) \) has two zeros in the interval \( (0,1) \), approximately \( \widehat{u}_1 \approx 0.0397 \) and \( \widehat{u}_2 \approx 0.1748 \). Evaluating the function \( \Psi \), we obtain \( \Psi(1) \approx 2.2495 \), \( \Psi(\widehat{u}_1) \approx 2.5893 \), and \( \Psi(\widehat{u}_2) \approx 1.8692 \).

According to case (ii.10) of Theorem~\ref{thm2}, if \( \theta \) is chosen in the interval \( \Psi(\widehat{u}_2) < \theta < \Psi(1) \), then the system admits three positive fixed points. For instance, taking \( \theta = 2 \), the fixed points are approximately
\[
E_1 \approx (0.02679, 0.12339), \quad E_2 = (0.1, 0.18), \quad E_3 \approx (0.3732, 0.2966).
\]

According to Theorem~\ref{type}, since \( q(u_1) \approx 1.422 > 1 \), the fixed point \( E_1 \) is repelling. Similarly, since \( q(u_3) \approx 1.2778 > 1 \), the fixed point \( E_3 \) is also repelling. The Jacobian matrix evaluated at the fixed point \( E_2 \) has eigenvalues \( \mu_1 \approx 1.68 \) and \( \mu_2 \approx 0.67 \), indicating that \( E_2 \) is a saddle point.
The corresponding bifurcation behaviour is shown in Figure~\ref{threefp}.

Now consider the system with parameters \( r = 0.5 \), \( \beta = 2.1 \), \( \gamma = 0.5 \), and \( \theta = 1.1 \). In this case, the system also admits three positive fixed points:
\[
E_1 \approx (0.3208, 0.5574), \quad E_2 = (0.5, 0.5), \quad E_3 \approx (0.7791, 0.2825).
\]

Here, since \( q(u_1) \approx 0.9755 < 1 \), the fixed point \( E_1 \) is attracting. Likewise, since \( q(u_3) \approx 0.3654 < 1 \), the fixed point \( E_3 \) is also attracting. The Jacobian at \( E_2 \) has eigenvalues \( \mu_1 \approx 1.0427 \) and \( \mu_2 \approx 0.7072 \), hence \( E_2 \) is again a saddle point.

\begin{figure}[h!]
    \centering
    \includegraphics[width=0.7\textwidth]{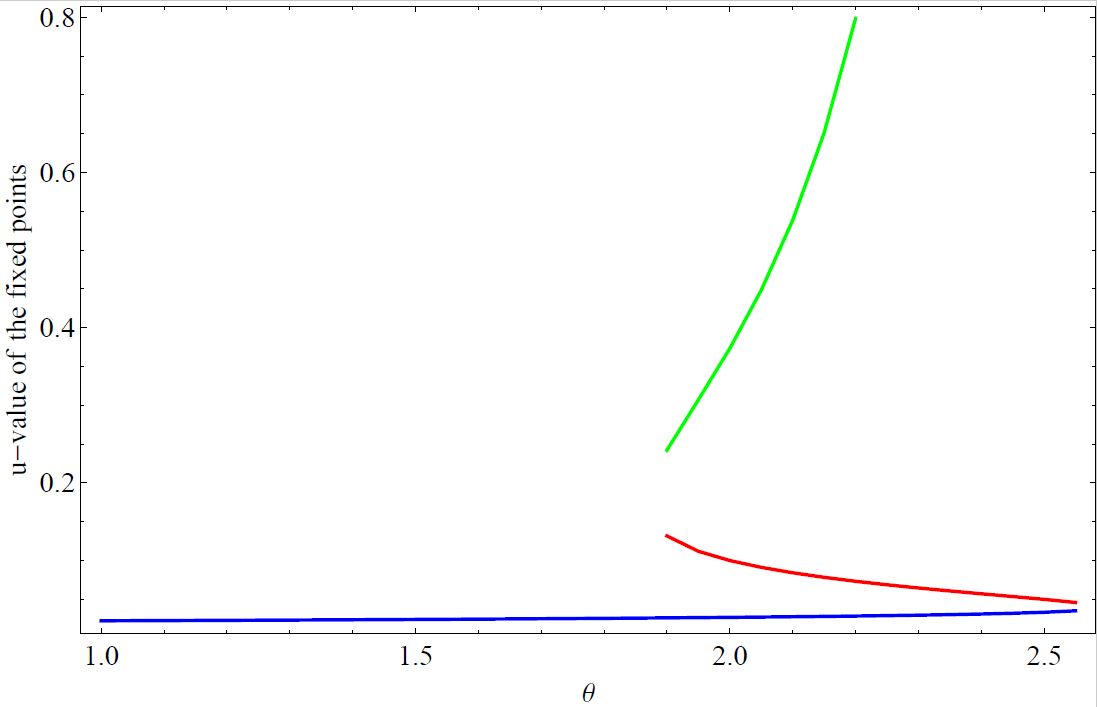}
    \caption{Bifurcation behaviour of the model (\ref{h12}) with parameters \( r = 0.5 \), \( \beta = 3 \), and \( \gamma = 0.1 \). The horizontal axis shows the bifurcation parameter \( \theta \), while the vertical axis represents the first coordinate \( u \) of the positive fixed points. The blue curve corresponds to the fixed point \( (u_1, v_1) \),  the red curve to \( (u_2, v_2) \) and the green curve to \( (u_3, v_3) \). For \( \theta\in(1.8692, 2.2495) \), three distinct positive fixed points exist. The lower branch (blue) undergoes a Neimark--Sacker bifurcation.}
    \label{threefp}
\end{figure}

\section{Conclusion}

In this work, we investigated the dynamics of a discrete-time phytoplankton–zooplankton model incorporating Holling type~II grazing and type~III toxin release. Theorem~\ref{thm2} established conditions for the existence of positive fixed points, while Theorem~\ref{type} classified their stability types. Theorem~\ref{bifurcation} demonstrated that the system~\eqref{h12} undergoes a Neimark--Sacker bifurcation at a positive fixed point under suitable parameter conditions. Additionally, Proposition~\ref{prop2} showed that the boundary fixed point \( (1,0) \) is globally asymptotically stable under certain conditions. These results provide a deeper understanding of the rich dynamics of the system and may offer insights for future ecological modeling and control applications.

To illustrate the analytical results, several examples were presented. In Example~1, parameters were chosen such that the system admits a unique positive fixed point. It was observed that the resulting dynamics exhibit an attracting closed invariant curve. In Example~2, the case with two positive fixed points was considered. One of these points undergoes a Neimark--Sacker bifurcation, while the other is a saddle. In Example~3, a scenario with three positive fixed points \( E_1, E_2, E_3 \) was studied. Parameter values were selected to demonstrate cases where \( E_1 \) and \( E_3 \) are either both repelling with \( E_2 \) a saddle, or both attracting with \( E_2 \) still a saddle. Unlike the results in \cite{SH-3}, where \( E_3 \) is always attracting, this study reveals that \( E_3 \) can also be repelling, highlighting the richer dynamics and broader applicability of the current model.

As part of future work, we aim to study the dynamics of the model for the remaining unexplored case among the nine possible combinations of \( f(P) \) and \( g(P) \) described in \cite{Chatt}, specifically where \( f(P) \) is of Holling type~III and \( g(P) \) is of Holling type~II.



\section*{Declarations}

\textbf{Ethical Approval:} This study does not involve human participants or animals; hence, no ethical approval was required.

\textbf{Funding:} The author received no financial support for the research, authorship, or publication of this article.

\textbf{Data Availability:} Not applicable.

\end{document}